\newtheorem{theorem}{Theorem}[section]
  \newtheorem{proposition}[theorem]{Proposition}
  \newtheorem{lemma}[theorem]{Lemma}
  \newtheorem{corollary}[theorem]{Corollary}
  \theoremstyle{definition}
  \newtheorem{definition}[theorem]{Definition}
  \newtheorem{example}[theorem]{Example}
\font\sixbb=msbm6
\font\eightbb=msbm8
\font\twelvebb=msbm10 scaled 1095
\newcommand{\Rea}{\mathbb{R}}
\newcommand{\Int}{\mathbb{Z}}
\newcommand{\Nat}{\mathbb{N}}
\newcommand{\KK}{\mathbb{K}}
\newcommand{\norm}[1]{\left\lVert#1\right\rVert}
\newtheorem{theorem}{\bf Theorem}[section]
\newtheorem{proposition}[theorem]{\bf Proposition}
\newtheorem{corollary}[theorem]{\bf Corollary}
\newtheorem{example}[theorem]{\bf Example}
\newtheorem{lemma}[theorem]{\bf Lemma}
\newtheorem{definition}[theorem]{\bf Definition}
\newcommand{\enp}{\begin{flushright} $\Box$ \end{flushright}}
\newcommand{\beq}[0]{\begin{equation}}
\newcommand{\enq}[0]{\end{equation}}
\newcommand{\bbk}{{\bf k}}
\newcommand{\bbm}{{\bf m}}
\newcommand{\bzero}{{\bf 0}}
\newcommand{\bone}{{\bf 1}}
\newcommand{\btwo}{{\bf 2}}
\newcommand{\intd}{\mathring{\Delta}}
\newcommand{\thh}{\tilde{H}}
\newcommand{\mb}[1]{{\mathbf #1}}
\newcommand{\cf}{{\cal F}}
\newcommand{\cg}{{\cal G}}
\newcommand{\ehat}{\widehat{E_{\cf}}}
\newcommand{\ovp}{\overrightarrow{P}}
\newcommand{\bj}{\mb{j}}
\newcommand{\bjp}{\mb{j'}}
\newcommand{\ca}{\mathfrak{A}}
\newcommand{\bfk}{b_{\cf,\bbk}}
\newcommand{\opi}{\overline{\Pi}}
\newcommand{\utt}{{\bf T}}
\newcommand{\ohat}{\widehat{1}}
\newcommand{\mc}[1]{{\mathcal #1}}
\newcommand{\intr}[1]{\mathring{#1}}
\title{Homology of Spaces of Directed Paths \\ in Euclidean Pattern Spaces}
\begin{document}
\author{Roy Meshulam\thanks{Department of Mathematics, Technion, Haifa
    32000, Israel. e-mail: meshulam@math.technion.ac.il~. Supported by
     ISF and GIF grants.}  \and Martin Raussen\thanks{ Department of
    Mathematical Sciences, Aalborg University, Fredrik Bajersvej 7G,
    9220 Aalborg {\O}st, Denmark.  e-mail: raussen@math.aau.dk~. Both
    authors acknowledge support from the ESF research networking
    programme ACAT.}}

\maketitle
\pagestyle{plain}

\vspace{-5mm}
\begin{center}
{\it In memory of Jirka Matou\v{s}ek}
\end{center}
\vspace{3mm}

\begin{abstract}
  Let $\cf$ be a family of subsets of $\{1,\ldots,n\}$ and let
  $$Y_{\cf}=\bigcup_{F \in \cf} \{(x_1,\ldots,x_n) \in \Rea^n: x_i \in
  \Int \text{~for~all~} i \in F\}.$$
  Let $X_{\cf}=\Rea^n \setminus Y_{\cf}$. For a vector of positive
  integers $\bbk=(k_1,\ldots,k_n)$ let
  $\vec{P}(X_{\cf})_{\bzero}^{\bbk+\bone}$ denote the space of
  monotone paths from $\bzero=(0,\ldots,0)$ to
  $\bbk+\bone=(k_1+1,\ldots,k_n+1)$ whose interior is contained in
  $X_{\cf}$. The path spaces $\vec{P}(X_{\cf})_{\bzero}^{\bbk+\bone}$
  appear as natural examples in the study of Dijkstra's PV-model for
  parallel computations in concurrency theory.

  \noindent We study the topology of
  $\vec{P}(X_{\cf})_{\bzero}^{\bbk+\bone}$ by relating it to a
  subspace arrangement in a product of simplices.  This, in
  particular, leads to a computation of the homology of
  $\vec{P}(X_{\cf})_{\bzero}^{\bbk+\bone}$ in terms of certain order
  complexes associated with the hypergraph $\cf$.
\end{abstract}

\section{Introduction}
\label{s:intro}
Concurrency theory in computer systems deals with properties of
systems in which several computations are executing simultaneously and
potentially interacting with each other. Among the many models
suggested for the study of concurrency are the Higher Dimensional
Automata (HDA) introduced by Pratt \cite{Pratt:90}. Those arise as
cubical complexes in which individual cubes (of varying dimension)
with directed paths on each of them, are glued together
consistently. Compared to other concurrency models, HDA have the
highest expressive power based on their ability to represent causal dependence \cite{Glabbeek:06}. On the other hand, only
little is known in general about the topology of the space of directed
paths of a general HDA \cite{Raussen:12a}.

A specific simple case of linear HDA's consists of the PV-model
suggested by Dijkstra \cite{Dijkstra:68} back in the 1960's.  In this
model there are $m$ resources (e.g.~shared memory sites)
$a_1,\ldots,a_m$ with positive integer capacities
$\kappa(a_1),\ldots,\kappa(a_m)$ where $\kappa(a_i)$ indicates the
maximal number of processes that $a_i$ can serve at any given time,
and $n$ linear processes $T_1,\ldots,T_n$ (without branchings or loops) that require access to these
resources. Given a resource $a$ and a process $T$, denote by $Pa$ and
$Va$ the locking and respectively unlocking of $a$ by $T$.  A process
$T_i$ is specified by a sequence of locking and unlocking operations
on the various resources in a certain order.  Modeling each process
$T_i$ as an ordered sequence of integer points on the interval
$(0,k_i]$, one can view a legal execution of $\utt=(T_1,\ldots,T_n)$
as a coordinate-wise non-decreasing continuous path from
$\bzero=(0,\ldots,0)$ to $\bbk+\bone=(k_1+1,\ldots,k_n+1)$ that avoids
a forbidden region determined by the processes and by the capacities
of the resources. If two such paths are homotopic via a homotopy
respecting the monotonicity condition then corresponding concurrent
computations along the two paths have always the same result
(\cite{FGR:06,FGHMR:15}).

Let $X_{\utt,\kappa}$ denote the complement of the
forbidden region in $\prod_{i=1}^n [0,k_i+1]$. The trace space
$\ovp(X_{\utt,\kappa})_{\bzero}^{\bbk+\bone}$ associated with the pair
$(\utt,\kappa)$ consists of all paths as above endowed with the
compact-open topology.  For example, for the two processes sharing two
resources depicted in Figure \ref{figure1}, the forbidden region is
the "Swiss Flag" and the trace space is homotopy equivalent to the two
point space $S^0$.
\begin{figure}
\begin{center}
  {\label{fig:sf}
  \scalebox{0.4}{\input{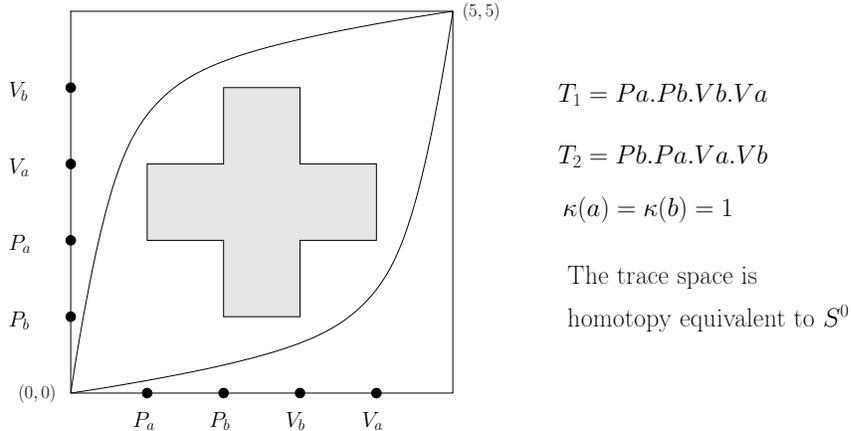}}}
  \caption{The Swiss flag example -- two processes sharing two resources.}
  \label{figure1}
\end{center}
\end{figure}
For an analysis of the PV spaces $X_{\utt,\kappa}$ and their
associated trace spaces $\ovp(X_{\utt,\kappa})_{\bzero}^{\bbk+\bone}$, we refer to
\cite{FGR:06,Raussen:10,Ziemianski:15,FGHMR:15}.

In this paper we consider a special class of PV models in which the
access and release of every resource happen \emph{without time delay}. In
this case, the forbidden region is a union of sets of the form
$B \cap (K_1 \times \cdots \times K_n)$, where $B$ is a fixed aligned
box and each $K_i$ is either $\Int$ or $\Rea$. Our main result (see
Theorem \ref{poin1} below) is a formula for the Poincar\'{e} series of
the trace spaces associated to such special PV models. We proceed with
some formal definitions leading to the statement of Theorem
\ref{poin1}.

Let $X$ be a subspace of $\Rea^n$.  A continuous path
$\mb{p}=(p_1,\ldots ,p_n):I=[0,1] \to X\subset \Rea^n$ is called
\emph{directed} if all components $p_i:I\to\Rea$ are
non-decreasing. For two points $y_0$ and $y_1$ in the closure of $X$,
let $\vec{P}(X)_{\mb{y}_0}^{\mb{y}_1}$ be the space of all directed
paths in $\bar{X}$ (endowed with the compact-open topology) starting at
$\mb{y}_0$ and ending at $\mb{y}_1$ whose interior is contained in
$X$.

Let $\Nat$ denote the non-negative integers and let $\Nat_+$ denote
the positive integers.  Let $\mb{k}=(k_1,\ldots,k_n) \in \Nat_+^n$ be
a fixed vector, and let $\bzero=(0,\ldots,0)$, $\bone=(1,\ldots,1)$,
$\bbk+\bone=(k_1+1,\ldots,k_n+1)$.  In this paper we study the
topology of $\vec{P}(X)_{\bzero}^{\bbk+\bone}$ for spaces $X$ that are
associated with the special PV programs described above. Let $\cf$ be
a family of subsets of $[n]=\{1,\ldots,n\}$ and let
\begin{equation}
\label{d:yf}
Y_{\cf}=\bigcup_{F \in \cf} \{(x_1,\ldots,x_n) \in \Rea^n: x_i \in
\Int \text{~for~all~} i \in F\}.
\end{equation}
The \emph{Euclidean Pattern Space} associated with $\cf$ is defined by
$X_{\cf}=\Rea^n \setminus Y_{\cf}$, with a corresponding \emph{Path Space} $\vec{P}(X_{\cf})_{\mb{0}}^{\mb{k}+\mb{1}}$.
\ \\ \\
{\bf Example:} If $\cf$ consists of the single set $[n]$ then
$X_{\cf}=\Rea^n \setminus\Int^n$.  Raussen and Ziemia\'{n}ski
\cite{RZ14} investigated the path space
$\vec{P}(\Rea^n\setminus \Int^n)_{\mb{0}}^{\mb{k}+\mb{1}}$ and
determined its homology groups and its cohomology ring. Their result
concerning homology is the following:
\begin{theorem}[Raussen and Ziemia\'{n}ski \cite{RZ14}]
For $n \geq 3$
\label{rz}
\begin{equation}
\label{dimh}
\tilde{H}_{\ell}(\vec{P}(\Rea^n\setminus\Int^n)_{\mb{0}}^{\mb{k}+\mb{1}})
= \left\{
\begin{array}{ll}
\Int^{\prod_{i=1}^n \binom{k_i}{m}} & \ell=(n-2)m,~ m>0 \\
0 & \emph{otherwise.}
\end{array}
\right.~~
\end{equation}
\end{theorem}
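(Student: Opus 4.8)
The plan is to replace the trace space by a combinatorial model — the complement of an arrangement of convex polytopes inside a product of order simplices — and to compute its homology by means of a nerve covering (equivalently, by a Goresky--MacPherson-style analysis of that arrangement).

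\emph{Step 1 (the model).} A directed path $\mathbf p=(p_1,\dots,p_n)$ from $\bzero$ to $\bbk+\bone$ is recorded, up to directed homotopy, by the closed intervals $I_{i,j}=p_i^{-1}(j)=[a_{i,j},b_{i,j}]\subseteq(0,1)$ for $1\le i\le n$, $1\le j\le k_i$; these satisfy $0<a_{i,1}\le b_{i,1}<a_{i,2}\le\cdots<a_{i,k_i}\le b_{i,k_i}<1$, so the data lie in a contractible subset of the product of order simplices $\Sigma=\prod_{i=1}^n\Sigma_i$, $\Sigma_i\cong\Delta^{2k_i}$. Since, for $t\in(0,1)$, one has $p_i(t)\in\Int$ iff $t\in\bigcup_jI_{i,j}$, the interior of $\mathbf p$ meets $\Int^n$ precisely when $\bigcap_iI_{i,j_i}\ne\emptyset$ for some $\bj=(j_1,\dots,j_n)\in\prod_i\{1,\dots,k_i\}$. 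Using that the fibres of $\mathbf p\mapsto((a_{i,j}),(b_{i,j}))$ are contractible (standard trace-space arguments, cf.\ \cite{FGR:06,RZ14}), I would identify $\vec P(\Rea^n\setminus\Int^n)_{\bzero}^{\bbk+\bone}$ up to homotopy with
$$\mc M\;=\;\Sigma\setminus\bigcup_{\bj}C_{\bj},\qquad C_{\bj}=\Big\{(a,b):\max_ia_{i,j_i}\le\min_ib_{i,j_i}\Big\},$$
each $C_{\bj}$ being a convex polytope (the intersection of $\Sigma$ with the half-spaces $a_{i,j_i}\le b_{i',j_{i'}}$, $i\ne i'$), so that all nonempty intersections of the $C_{\bj}$ are convex, hence contractible.

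\emph{Step 2 (a good cover of the complement).} By Helly's theorem on the line, $\bigcap_iI_{i,j_i}=\emptyset$ iff $b_{i,j_i}<a_{i',j_{i'}}$ for some ordered pair $i\ne i'$, so $\mc M=\bigcup_\pi V_\pi$, where $\pi$ ranges over functions assigning to each $\bj$ an ordered pair, and $V_\pi=\{(a,b)\in\Sigma: b_{i,j_i}<a_{i',j_{i'}}\text{ for all }\bj,\text{ with }(i,i')=\pi(\bj)\}$ is an intersection of open half-spaces with $\Sigma$, hence convex; every finite intersection of such $V_\pi$ is again convex, so contractible or empty. The nerve lemma then gives $\mc M\simeq\mc N$, the nerve of $\{V_\pi\}$. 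A set of $\pi$'s spans a simplex of $\mc N$ exactly when the strict relations ``$I_{i,c}$ precedes $I_{i',d}$'' they collectively impose, together with the per-axis chain relations, admit a common linear refinement — i.e.\ are realized by some shuffle of the $n$ chains $I_{i,1},\dots,I_{i,k_i}$ — which renders $\mc N$ a finite simplicial complex of an explicit, purely combinatorial nature.

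\emph{Step 3 (homology of the nerve).} It remains to show $\thh_\ell(\mc N)=\Int^{\prod_i\binom{k_i}{m}}$ for $\ell=(n-2)m>0$ and $0$ otherwise. The simplest case $\bbk=\bone$ is the single-obstacle problem ($\Rea^n$ minus one point): here $\mc N$ is the complex of acyclic sub-digraphs of the complete digraph on $[n]$, and I would check (e.g.\ by shelling its maximal faces, the $n!$ transitive tournaments, or by discrete Morse theory) that it is homotopy equivalent to $S^{n-2}$, recovering $\thh_*(S^{n-2})$. For general $\bbk$ the plan is to run a discrete Morse matching on $\mc N$ — equivalently, to filter $\mc N$ by the largest ``depth'' of a nested chain of mutually separated obstacles — whose critical cells lie only in dimensions $0$ and $(n-2)m$, $m\ge1$, the ones of dimension $(n-2)m$ being indexed by tuples $(S_1,\dots,S_n)$ with $S_i\subseteq\{1,\dots,k_i\}$, $|S_i|=m$: such a tuple contributes an $(n-2)m$-sphere assembled from $m$ ``parallel'' copies of the single-obstacle sphere $S^{n-2}$ (the $t$-th copy using levels $s_{1,t},\dots,s_{n,t}$, where $S_i=\{s_{i,1}<\cdots<s_{i,m}\}$). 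Counting these critical cells yields the stated Betti numbers; for $n\ge4$ the gaps between the dimensions $(n-2)m$ force the Morse differential to vanish, and for $n=3$ one verifies this vanishing directly.

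\emph{Main obstacle.} Steps 1--2 are bookkeeping on top of the trace-space machinery; the content is Step 3, namely tracking how obstacles at different levels interact inside $\mc N$ and producing a clean Morse matching — in particular confirming that a system of equal-size level sets $(S_1,\dots,S_n)$ contributes precisely one sphere $S^{(n-2)m}$ and nothing in intermediate degrees. This combinatorial analysis is the heart of the Raussen--Ziemia\'{n}ski argument.
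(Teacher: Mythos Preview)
Your route is genuinely different from the one taken in the paper, and the difference is instructive. The paper does \emph{not} work with interval data $[a_{i,j},b_{i,j}]$ at all: it first shows (Lemma~\ref{lemma:incl}) that one may restrict to \emph{strictly increasing} paths, so that each $p_i$ is a homeomorphism and $p_i^{-1}(j)$ is a single time $x_{ij}$. The model then lives in $\intd_{\bbk}=\prod_i\intd_{k_i}\subset\Rea^N$ with $N=\sum_ik_i$ (half as many coordinates as in your $\Sigma$), and the forbidden region becomes the \emph{linear} subspace arrangement $E_\cf=\bigcup_{\bj}\{x_{1,j_1}=\cdots=x_{n,j_n}\}$. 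Passing to the one-point compactification $S^N$ and applying Alexander duality reduces the problem to the homotopy type of $\ehat$, which is obtained from the Ziegler--\v{Z}ivaljevi\'{c} Wedge Lemma: for $\cf=\{[n]\}$ the intersection poset consists of chains $\bj_1<\cdots<\bj_m$ in $\prod_i[k_i]$, each contributing a sphere of the correct dimension, and the count $\prod_i\binom{k_i}{m}$ is immediate (Section~\ref{s:appl}). No Morse theory is needed.

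Your Steps~1--2 are reasonable (the ``fibres are contractible'' assertion is essentially the content of Proposition~\ref{prop:homeq} and needs a similar argument), but Step~3 is where all the work sits and it is only a plan. You have a nerve on roughly $(n(n-1))^{\prod_ik_i}$ vertices, you conjecture a Morse matching whose critical cells are indexed by tuples $(S_1,\dots,S_n)$ of $m$-subsets, and you assert (without construction) that each such tuple yields a single $(n-2)m$-dimensional sphere. None of this is carried out, and you yourself flag the $n=3$ differential as unverified. That is the gap: the paper's passage to a linear arrangement and Alexander duality replaces this open-ended combinatorial analysis by a two-line identification of the intersection poset, and that idea is precisely what is missing from your proposal.
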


\noindent The Betti number $\prod_{i=1}^n \binom{k_i}{m}$ in
(\ref{dimh}) corresponds to the number of strictly increasing integer
sequences of length $m$ strictly between $\mb{0}$ and $\mb{k}+\mb{1}$.

In this paper we consider $\vec{P}(X_{\cf})_{\bzero}^{\bbk+\bone}$
for general $\cf$.  Without loss of generality we may assume that
$\cf$ is \emph{upward closed}, i.e. if $F \in \cf$ and
$F \subset F' \subset [n]$ then $F' \in \cf$.
It will also be assumed that $|F| \geq 2$ for all $F \in \cf$ (otherwise
$\vec{P}(X_{\cf})_{\bzero}^{\bbk+\bone}$ is empty).
We first introduce some terminology.
\begin{definition}
\label{d:comb}
$~$
\begin{itemize}
\item[(i)]
A subset $\cg \subset \cf$ is a \emph{matching} if
$G \cap G'=\emptyset$ for all $G \neq G' \in \cg$.  Let $M(\cf)$
denote the family of all nonempty matchings of $\cf$, with partial order $\preceq$ given by $\cg \preceq \cg'$ if for every
$G \in \cg$ there exists a $G' \in \cg'$ such that $G \subset G'$.
For $K \subset [n]$ let
$$M(\cf)_{\preceq K}=\{\cg \in M(\cf): G \subset K \text{~for~all~} G \in \cg\}$$
and let $M(\cf)_{\prec K}=M(\cf)_{\preceq}(K) \setminus \{\{K\}\}$.  The order
complex of $M(\cf)_{\prec K}$ is denoted by
$\Delta(M(\cf)_{\prec K})$.
\item[(ii)]
For a function $\bbm: \cf \rightarrow \Nat$ let $T_{\cf}(\bbm)$ be the
(simple undirected) graph on the vertex set
$\cup_{F \in \cf} \{F\} \times [\bbm(F)]$, where two vertices
$(F,i) \neq (F',i')$ are connected by an edge if
$F \cap F' \neq \emptyset$.\\
\item[(iii)]
An \emph{orientation} of a simple undirected graph $G=(V,E)$ will be
determined by a function $\alpha:E \rightarrow V^2$ that maps an edge
$\{u,v\} \in E$ to either $(u,v)$ or $(v,u)$. An orientation is
\emph{acyclic} if the resulting directed graph does not contain directed
cycles.  Let $\ca(G)$ denote the set of acyclic orientations of $G$
and let $a(G)=|\ca(G)|$. By a result of Stanley \cite{Stanley73}, $a(G)$ can be computed by evaluating the chromatic polynomial of $G$ at $-1$.
\end{itemize}
\end{definition}
\noindent
For $\bbm: \cf \rightarrow \Nat$ let
\begin{equation}
\label{d:bfk}
\begin{split}
  \bfk(\bbm)&=\frac{a(T_{\cf}(\bbm))}{\prod_{F \in \cf} \bbm(F)!}
  \prod_{i=1}^n
  \binom{k_i}{\sum_{F \ni i} \bbm(F)} ~~,\\
  c_{\cf}(\bbm)&=\sum_{F \in \cf} \bbm(F)(|F|-2)+1.
\end{split}
\end{equation}
The \emph{reduced Poincar\'{e} series} of a space $Y$ over a field
$\KK$ is defined by

\begin{equation}
  f_{\KK}(Y,t)=\sum_{i \geq 0} \dim \thh_{i-1}(Y;\KK) t^i.
\label{eq:Poiser}
\end{equation}

\noindent Our main result is the following
\begin{theorem}
\label{poin1}
$~$ \\
(i) If $H_*(\Delta(M(\cf)_{\prec F});\Int)$ is free for all
$F \in \cf$ then $H_*(\vec{P}(X_{\cf})_{\bzero}^{\bbk+\bone};\Int)$ is
free.
\\
(ii) For any field $\KK$
\begin{equation}
\label{po2}
f_{\KK}\left(\vec{P}(X_{\cf})_{\bzero}^{\bbk+\bone},t\right)=\sum_{0 \neq \bbm \in \Nat^{\cf}}  \bfk(\bbm)t^{c_{\cf}(\bbm)}
\prod_{F \in \cf} f_{\KK}\left(\Delta(M(\cf)_{\prec F}),t^{-1}\right)^{\bbm(F)}.
\end{equation}
\end{theorem}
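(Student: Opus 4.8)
The idea is to model $\vec{P}(X_{\cf})_{\bzero}^{\bbk+\bone}$ as the complement of a subspace arrangement in a product of simplices and then to read its homology off the intersection poset by a Goresky--MacPherson type formula.

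\emph{Step 1: a combinatorial model.} A generic directed path $\mb{p}=(p_1,\dots,p_n)$ from $\bzero$ to $\bbk+\bone$ is described, up to monotone reparametrization, by the times $s_i^j$ at which $p_i$ attains the integer value $j$, subject to $0<s_i^1<\cdots<s_i^{k_i}<1$; thus such paths are parametrized by the interior $\mathring{\Delta}_{\bbk}=\prod_{i=1}^n\mathring{\Delta}^{k_i}$ of a product of simplices, of dimension $N=\sum_i k_i$. The interior of such a $\mb{p}$ meets $Y_{\cf}$ exactly when, for some $F\in\cf$ and some level tuple $\ell=(\ell_i)_{i\in F}$, one has $s_i^{\ell_i}=s_{i'}^{\ell_{i'}}$ for all $i,i'\in F$. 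I would prove that $\vec{P}(X_{\cf})_{\bzero}^{\bbk+\bone}$ is homotopy equivalent to $M:=\mathring{\Delta}_{\bbk}\setminus\mathcal{A}_{\cf}$, where $\mathcal{A}_{\cf}$ is the arrangement of the convex cells $A_{F,\ell}=\{s\in\mathring{\Delta}_{\bbk}:s_i^{\ell_i}=s_{i'}^{\ell_{i'}}\text{ for all }i,i'\in F\}$ over \emph{all} $F\in\cf$ (this is where it matters that $\cf$ is upward closed) and all~$\ell$. Turning the informal ``order type'' correspondence into a bona fide homotopy equivalence, and in particular handling the non-generic paths (coinciding crossing times; lingering at $\bzero$ or $\bbk+\bone$), is the step I expect to be the main obstacle.

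\emph{Step 2: Goresky--MacPherson.} Let $L=L(\mathcal{A}_{\cf})$ be the intersection poset ordered by reverse inclusion, with bottom $\hat{0}=\mathring{\Delta}_{\bbk}$, and for $x\in L$ let $\Delta(\hat{0},x)$ be the order complex of the open interval below $x$. A Goresky--MacPherson type isomorphism $\thh^i(M;\Int)\cong\bigoplus_{x>\hat{0}}\thh_{\mathrm{codim}(x)-i-2}(\Delta(\hat{0},x);\Int)$ applies; over a field it reorganizes, using the convention (\ref{eq:Poiser}), into
\[f_{\KK}(M,t)=\sum_{x\in L,\ x>\hat{0}}t^{\mathrm{codim}(x)}\,f_{\KK}\!\left(\Delta(\hat{0},x),t^{-1}\right),\]
and over $\Int$ it shows $\thh_*(M;\Int)$ is free as soon as each $\thh_*(\Delta(\hat{0},x);\Int)$ is.

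\emph{Step 3: the intervals.} An element $x\in L$ imposes an equivalence relation on the variables $\{s_i^j\}$; because $\cf$ is upward closed, each of its blocks has coordinate support in $\cf$, and these blocks --- I call them the \emph{events} of $x$ --- are visits of patterns $F_1,\dots,F_q\in\cf$ at level tuples that are disjoint in the variables and consistently ordered on shared coordinates. Since distinct events involve disjoint sets of variables, $\mathcal{A}_{\cf}$ is locally a product near $x$, so $(\hat{0},x)$ is the proper part of $\prod_{a=1}^q\big(M(\cf)_{\preceq F_a}\cup\{\hat{0}\}\big)$: an element below $A_{F_a,\ell_a}$ is the coarsening of the block $F_a$ by some matching $\cg\preceq\{F_a\}$, which is precisely the poset $M(\cf)_{\preceq F_a}$ of Definition~\ref{d:comb}(i) (with top $\{F_a\}$ and proper part $M(\cf)_{\prec F_a}$). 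By the standard description of the order complex of the proper part of a product of bounded posets as an iterated suspension of the join of the order complexes of the proper parts,
\[\Delta(\hat{0},x)\ \simeq\ S^{q-2}\ast\Delta(M(\cf)_{\prec F_1})\ast\cdots\ast\Delta(M(\cf)_{\prec F_q}).\]
Joins of spaces with free integral homology again have free integral homology, so this together with Step~2 proves part~(i).

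\emph{Step 4: summation.} Finally I would group $\sum_{x\in L,\,x>\hat{0}}$ by the multiplicity function $\bbm\in\Nat^{\cf}$ giving the number $\bbm(F)$ of events of $x$ at each pattern $F$. For fixed $\bbm$, put $q=\sum_F\bbm(F)$; then $\mathrm{codim}(x)=\sum_F\bbm(F)(|F|-1)$, and, using $f_{\KK}(S^{d},s)=s^{d+1}$ together with the multiplicativity of $f_{\KK}$ under joins, each such $x$ contributes to $f_{\KK}(M,t)$ the monomial $t^{\,\sum_F\bbm(F)(|F|-1)-(q-1)}\prod_F f_{\KK}(\Delta(M(\cf)_{\prec F}),t^{-1})^{\bbm(F)}$, whose exponent equals $c_{\cf}(\bbm)$. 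That the number of $x\in L$ with multiplicities $\bbm$ is $\bfk(\bbm)$ follows because the events meeting a fixed coordinate $i$ form a clique in $T_{\cf}(\bbm)$ and must occur in the increasing order of the $\sum_{F\ni i}\bbm(F)$ distinct levels of $[k_i]$ they use --- assignable in $\binom{k_i}{\sum_{F\ni i}\bbm(F)}$ ways --- while the partial order of all $q$ events recorded by $x$ is exactly an acyclic orientation of $T_{\cf}(\bbm)$ (accounting for the factor $a(T_{\cf}(\bbm))$), and one divides by $\prod_F\bbm(F)!$ for the labelling of the $\bbm(F)$ events at each $F$; this is $\bfk(\bbm)$ as in~(\ref{d:bfk}). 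Summing over $0\neq\bbm\in\Nat^{\cf}$ yields~(\ref{po2}); terms for which $\Delta(M(\cf)_{\prec F})$ is $\KK$-acyclic simply vanish. Besides Step~1, it is this final count --- matching acyclic orientations to the admissible configurations --- that I expect to require the most care.
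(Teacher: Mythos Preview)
Your plan is essentially the paper's argument, with the same four ingredients: the model $\vec{P}(X_{\cf})_{\bzero}^{\bbk+\bone}\simeq D_{\cf}=\mathring{\Delta}_{\bbk}\setminus\mathcal{A}_{\cf}$ (your Step~1 is the paper's Proposition~\ref{prop:homeq}); the identification of each lower interval $(\hat 0,x)$ with the proper part of a product $\prod_a M(\cf)_{\preceq F_a}^*$, whence $\Delta(\hat 0,x)\cong S^{q-2}\ast\join_a\Delta(M(\cf)_{\prec F_a})$ via Walker's theorem (your Step~3 is the paper's Proposition~\ref{p:dqq}); and the enumeration of strata with prescribed multiplicity $\bbm$ by $\bfk(\bbm)$ via acyclic orientations (your Step~4 is the paper's Proposition~\ref{affm}).

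The one genuine difference is packaging. You invoke a Goresky--MacPherson formula directly on the complement $M$; the paper instead passes to the one-point compactification $\ehat\subset S^N$, applies the Ziegler--\v{Z}ivaljevi\'{c} Wedge Lemma to obtain the homotopy decomposition of Theorem~\ref{homgamma1}, and then uses Alexander duality. These are two phrasings of the same mechanism (the Wedge Lemma is exactly what underlies the combinatorial proof of Goresky--MacPherson), but the paper's route buys two things. First, it yields a homotopy-level statement for $\ehat$, not just homology. Second, and more to the point of the paper's explicit caveat that ``classical results are not immediately applicable'' because the arrangement sits in a product of simplices rather than in $\Rea^N$: working in the compactification makes the applicability of the Wedge Lemma transparent, whereas your direct invocation of Goresky--MacPherson needs a sentence of justification (the pieces and their nonempty intersections are contractible convex cells; equivalently, identify $\mathring{\Delta}_{\bbk}\cong\Rea^N$ and note that some intersections which would be nonempty linearly are empty here---precisely the non-acyclic ones, which you correctly exclude from the intersection poset). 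With that sentence added, your argument goes through and matches the paper's.
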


The paper is organized as follows: In Section \ref{s:arrangements} we
describe a subspace arrangement $D_{\cf}$ that is homotopy equivalent
to $\vec{P}(X_{\cf})_{\bzero}^{\bbk+\bone}$.  In Section \ref{s:wedge}
we state Theorem \ref{homgamma1} that describes the homotopy type of
the Alexander dual of $D_{\cf}$ and then use it to prove Theorem
\ref{poin1}. The proof of Theorem \ref{homgamma1} is given in Section
\ref{s:hoehat} which constitutes the main technical part of the paper.
In Section \ref{s:appl} we discuss several applications arising from
particular cases of Theorem \ref{poin1}. The easy conclusions about (higher) connectivity of path spaces in Section \ref{st:conn} are probably 
the most notable ones for applications in concurrency theory. Some open problems are
mentioned in Section \ref{s:remarks}.

\section{Directed Paths via Subspace Arrangements}
\label{s:arrangements}
Spaces of directed paths in a PV-model have been shown to be homotopy
equivalent to certain finite prod-simplicial complexes that make
homology computations possible -- at least in principle
\cite{Raussen:10,FGHMR:15}. Unfortunately, these complexes grow very
fast in dimension and size. Here we give an alternative description as
\emph{complement of a subspace arrangement}. Remark that directedness
has the consequence that such an arrangement has to be considered as a
subset of a product of simplices and \emph{not} of Euclidean space;
this is the reason why classical results are not immediately
applicable.

Let $\mc{F}$ be an upward closed hypergraph on $[n]$ and let
$\mb{k}=(k_1,\ldots,k_n) \in \Nat_+^n$.  In this section we describe a
model for $\vec{P}(X_{\cf})_{\bzero}^{\bbk+\bone}$ up to homotopy
equivalence.

\begin{definition}
\label{d:simplex}
$~$
\begin{itemize}
\item[(i)]
For $k \geq 1$ let $\intd_k$ denote the open $k$-simplex
$$\intd_k =\{(x_1,\ldots,x_k) \in \Rea^k:0 < x_1 < \cdots < x_k < 1\}.$$
For $k=0$ let $\intd_0$ denote the one point space $\{*\}$.\\
For $\mb{k}=(k_1,\ldots,k_n) \in \Nat_+^n$ let
\begin{equation}
  \label{eq:Delta}
  N=\sum_{i=1}^n k_i,\; [\bbk]=\prod_{i=1}^n[k_i], \mbox{ and }
\intd_{\bbk}= \prod_{i=1}^n \intd_{k_i} \subset \Rea^N.
\end{equation}
\item[(ii)] For $F\subset [n]$ let $[\bbk_F]=\prod_{i \in F}
  [k_i]$.
  For $\bj=(\bj(i))_{i\in F} \in [\bbk_F]$ and $F' \subset F$, the
  \emph{restriction} $(\bj)_{|F'} \in [\bbk_{F'}]$ of $\bj$ to $F'$ is
  given by $(\bj)_{|F'}(i)=\bj(i)$ for all $i \in F'$. A \emph{partial
    sequence} is a pair $(F,\bj)$ where $F \subset [n]$ and
  $\bj=(\bj(i))_{i\in F} \in [\bbk_F]$. Let $S_{\cf}$ be the family of
  all partial sequences $(F,\bj)$ where $F \in \cf$ and
  $\bj \in [\bbk_{F}]$.
\item[(iii)]
For a partial sequence
$(F,\bj)$ let
$$G_{(F,\bj)}=
\{(x_{i1},\ldots, x_{ik_i})_{i=1}^n \in \prod_{i=1}^n \intd_{k_i}:
x_{i\bj(i)}=x_{i'\bj(i')} \text{~for~all~} i,i' \in F\}.$$
\\
Let
$$E_{\cf}=\bigcup_{(F,\bj) \in S_{\cf}}
G_{(F,\bj)}~~~~,~~~~D_{\cf}=\intd_{\bbk}-E_{\cf}.$$
\item[(iv)]
The one-point compactification of $\intd_{\bbk}$ is given by
$$\widehat{\intd}_{\bbk}=\intd_{\bbk}
\cup\{\infty\}=\Delta_{\bbk}/_{\partial\Delta_{\bbk}}\cong S^N.$$
For $(F,\bj) \in S_{\cf}$, the compactification
of $G_{(F,\bj)}$ in $\widehat{\intd}_{\bbk}$ is given by
$\Gamma_{(F,\bj)}=G_{(F,\bj)} \cup \{\infty\}$.  The compactification
of $E_{\cf}$ in $\widehat{\intd}_{\bbk}$ is $$\ehat=E_{\cf} \cup \{\infty\}.$$
\end{itemize}
\end{definition}

Let
$\vec{P}_<(X_{\cf})_{\bzero}^{\bbk+\bone}\subset\vec{P}(X_{\cf})_{\bzero}^{\bbk+\bone}$
denote the space of \emph{increasing} directed paths
$\mb{p}=(p_1,\ldots ,p_n):I\to X_{\cf}\subset\mb{R}^n$ characterized
by $t<t'\Rightarrow p_i(t)<p_i(t')$ (instead of $\le$) for all $i$.
Remark that every component $p_i$ is a homeomorphism of the unit
interval.

A correspondence between the space $D_{\mc{F}}$ from Definition
\ref{d:simplex}(iii) and this path space
$\vec{P}_<(X_{\cf})_{\bzero}^{\bbk+\bone}$ and may be established as
follows: For every $k\in\Nat_+$ and
$\mb{x}=(x_1,\dots ,x_k)\in\intr{\Delta}_k$ let
$p_{\mb{x}}:I\to [0,k+1]$ denote the (directed) path with
$p_{\mb{x}}(0)=0$, $p_{\mb{x}}(1)=k+1$,
$p_{\mb{´x}}(x_i)=i, 1\le i\le k,$ and connected by line segments
inbetween. For every $\mb{0}<\mb{k}\in\Nat^n_+$ and every
$\mb{x}=(\mb{x}_1,\dots \mb{x}_n)\in\intr{\Delta}_{\mb{k}}$
(cf.~\ref{eq:Delta}), let
$\mb{p}(\mb{x})(t)=(p_{\mb{x}_1}(t),\dots ,p_{\mb{x}_n}(t))$. This
recipe defines a continuous map
$P:\intr{\Delta}_{\mb{k}}\to\vec{P}_<(\mb{R}^n)_{\mb{0}}^{\mb{k}+\mb{1}}$
that restricts to a map
$P_{\mc{F}}^<:
D_{\mc{F}}\to\vec{P}_<(X_{\mc{F}})_{\mb{0}}^{\mb{k}+\mb{1}}$:
For $\mb{x}=(\mb{x}_1,\dots ,\mb{x}_n)\in\intr{\Delta}_{\mb{k}}$ and
$\mb{x}_i=(x_{i1},\dots ,x_{i k_i})\in\intr{\Delta}_{k_i} ,\;
(F,\mb{j})\in S_{\mc{F}}$
and $0<t<1$ assume that $p_{\mb{x}_i}(t)=\mb{j}(i)\in\mb{Z}, i\in F$.
Then $t=x_{i\mb{j}(i)}=x_{i'\mb{j}(i')}$ for $i,i'\in F$ and hence
$\mb{x}\in E_{\mc{F}}$.

The composition of $P_{\mc{F}}^<$ with the inclusion map
$i: \vec{P}_<(X_{\mc{F}})_{\mb{0}}^{\mb{k}+\mb{1}}
\hookrightarrow\vec{P}(X_{\mc{F}})_{\mb{0}}^{\mb{k}+\mb{1}}$
will be denoted by
$\vec{P}_{\mc{F}}:
D_{\mc{F}}\to\vec{P}(X_{\mc{F}})_{\mb{0}}^{\mb{k}+\mb{1}}$.

\begin{proposition}\label{prop:homeq}
  The map
  $\vec{P}_{\mc{F}}: D_{\mc{F}}\to\vec{P}(X_{\mc{F}})_{\mb{0}}^{\mb{k}+\mb{1}}$
  is a homotopy equivalence.
\end{proposition}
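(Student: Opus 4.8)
The plan is to exhibit $\vec{P}_{\mc{F}}$ as a composition (up to homotopy) of two maps, each of which is visibly a homotopy equivalence. The first step is to forget the hypergraph constraint and understand the universal case: I would first show that
$$P:\intr{\Delta}_{\mb{k}}\to\vec{P}_<(\mb{R}^n)_{\mb{0}}^{\mb{k}+\mb{1}}$$
is a homeomorphism. Indeed, a point $\mb{x}\in\intr{\Delta}_{\mb{k}}$ is exactly a choice, for each coordinate $i$, of $k_i$ distinct interior times $0<x_{i1}<\cdots<x_{ik_i}<1$ at which the $i$-th component passes through the integers $1,\dots,k_i$; conversely an increasing directed path $\mb{p}=(p_1,\dots,p_n)$ from $\bzero$ to $\bbk+\bone$ has each $p_i$ a homeomorphism $I\to[0,k_i+1]$, so $p_i^{-1}(\{1,\dots,k_i\})$ recovers such a point. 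The inverse map $\mb{p}\mapsto(p_i^{-1}(j))_{i,j}$ is continuous in the compact-open topology, and $P$ is continuous as stated; one checks the two are mutually inverse. Restricting, $P$ carries $D_{\mc{F}}=\intr{\Delta}_{\mb{k}}\setminus E_{\mc{F}}$ homeomorphically onto $\vec{P}_<(X_{\mc{F}})_{\bzero}^{\bbk+\bone}$: the computation already sketched in the excerpt shows that if $\mb{p}(\mb{x})$ meets $Y_{\mc F}$ at some interior time then $\mb{x}\in E_{\mc F}$, and the converse is equally immediate from the same identity of coordinates, so $P(D_{\mc F})$ is precisely the increasing directed paths whose interior avoids $Y_{\mc F}$.

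The second and genuinely topological step is to show that the inclusion
$$i:\vec{P}_<(X_{\mc{F}})_{\bzero}^{\bbk+\bone}\hookrightarrow\vec{P}(X_{\mc{F}})_{\bzero}^{\bbk+\bone}$$
is a homotopy equivalence, since then $\vec{P}_{\mc F}=i\circ P_{\mc F}^<$ is one too. The idea is to construct a deformation retraction of the larger trace space onto the strictly increasing one by reparametrizing each path. Given a directed path $\mb{p}$ from $\bzero$ to $\bbk+\bone$ with interior in $X_{\mc F}$, one can perturb it to a strictly increasing one by interpolating with the "diagonal" path $t\mapsto t\cdot(\bbk+\bone)$ (or, more carefully, with the affine path $P$ sends a suitable interior point to), using a convex-combination-type homotopy $H_s(\mb p)(t)=(1-\phi(s,t))\mb p(t)+\phi(s,t)\cdot(\text{strictly increasing reference})$ that stays directed and keeps the interior inside $X_{\mc F}$. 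One must be careful: a naive straight-line homotopy in $\mb R^n$ need not keep the interior in $X_{\mc F}$ (the forbidden region $Y_{\mc F}$ is not convex). The fix is that $X_{\mc F}$ is open and its complement is a union of axis-aligned "slabs" of the form $\{x_i\in\mb Z\ \forall i\in F\}$; a strictly increasing path only touches $Y_{\mc F}$ at isolated times, and by a local surgery argument — pushing the path off the finitely many bad parameter values by a small directed perturbation, then making it globally strictly increasing — one obtains a canonical retraction. Standard continuity/compactness arguments in the compact-open topology show the homotopy is continuous in both variables and fixes $\vec{P}_<(X_{\mc F})_{\bzero}^{\bbk+\bone}$.

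The main obstacle I expect is precisely this second step: producing the retraction $\vec{P}(X_{\mc F})_{\bzero}^{\bbk+\bone}\to\vec{P}_<(X_{\mc F})_{\bzero}^{\bbk+\bone}$ in a way that is simultaneously (a) directed throughout the homotopy, (b) keeping the open part inside $X_{\mc F}$, and (c) continuous in the path. A cleaner route, which I would try first, is to quote the general machinery of d-spaces: for a suitable class of d-spaces the inclusion of the space of strictly-directed traces into the space of all directed traces is known to be a homotopy equivalence (this underlies the prod-simplicial models of \cite{Raussen:10,FGHMR:15}). If $X_{\mc F}$ falls into that class — it does, being an open subset of $\mb R^n$ obtained by deleting a locally finite axis-parallel arrangement — then step two is immediate and only step one, the homeomorphism $P$, requires checking, which is elementary. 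I would present the argument in that order: establish the homeomorphism onto $\vec{P}_<$, then invoke (or, if a self-contained paper is wanted, prove by the reparametrization/surgery argument above) the homotopy equivalence $\vec{P}_<\hookrightarrow\vec{P}$, and conclude.
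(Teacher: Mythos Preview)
Your overall strategy---factoring $\vec{P}_{\mc F}$ through $\vec{P}_<(X_{\mc F})_{\bzero}^{\bbk+\bone}$ and handling the two maps separately---is exactly the paper's. But there is a genuine error in your first step, and you overcomplicate the second.

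The claim that $P:\intr{\Delta}_{\mb{k}}\to\vec{P}_<(\mb{R}^n)_{\bzero}^{\bbk+\bone}$ is a \emph{homeomorphism} is false: the image of $P$ consists only of piecewise-linear paths, while $\vec{P}_<$ contains all strictly increasing ones. Your proposed inverse $Q(\mb p)=(p_i^{-1}(j))_{i,j}$ is well defined and continuous, and indeed $Q\circ P=\mathrm{id}$, but $P\circ Q$ sends $\mb p$ to its PL interpolant through the integer-level crossings, which is not $\mb p$ in general. The repair, which is what the paper does, is to show $P\circ Q\simeq\mathrm{id}$ via the straight-line homotopy $s\mapsto(1-s)\mb p+s(P\circ Q)(\mb p)$. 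The point making this respect $X_{\mc F}$ is that on each parameter interval $[p_i^{-1}(j),p_i^{-1}(j+1)]$ both $p_i$ and $(P\circ Q)(\mb p)_i$ increase strictly from $j$ to $j+1$, so every convex combination does too; hence each path in the homotopy has $i$-th coordinate in $\Int$ at exactly the same times as $\mb p$ does, and therefore stays in $X_{\mc F}$ if $\mb p$ does. So $P_{\mc F}^<$ is a homotopy equivalence, not a homeomorphism.

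For the inclusion $i:\vec{P}_<\hookrightarrow\vec{P}$, your worry about the straight-line homotopy is legitimate, but neither surgery nor an appeal to general d-space machinery is needed. The paper interpolates with the diagonal $\delta_{\mb k}(t)=t(\mb k+\bone)$ as you suggest; the missing observation is simply that $\mb p(I)$ and $Y_{\mc F}\cap[\bzero,\bbk+\bone]$ are disjoint compact subsets of the box and hence lie at positive distance $d(\mb p)$, depending continuously on $\mb p$. Taking the interpolation parameter $s(\mb p)=d(\mb p)/\max_i k_i$ keeps $\mb p_{s(\mb p)}=(1-s(\mb p))\mb p+s(\mb p)\delta_{\mb k}$ inside $X_{\mc F}$ (and strictly increasing), and the obvious homotopy $t\mapsto\mb p_{ts(\mb p)}$ shows $i\circ r\simeq\mathrm{id}$ and $r\circ i\simeq\mathrm{id}$.
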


We prove Proposition \ref{prop:homeq} via the following two lemmas:

\begin{lemma}
  The map
  $P_{\mc{F}}^<:
  D_{\mc{F}}\to\vec{P}_<(X_{\mc{F}})_{\mb{0}}^{\mb{k}+\mb{1}}$
  is a homotopy equivalence.
\end{lemma}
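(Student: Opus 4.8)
The plan is to produce an explicit homotopy inverse
$Q\colon \vec{P}_<(X_{\cf})_{\bzero}^{\bbk+\bone}\to D_{\cf}$
of $P_{\cf}^{<}$ by ``reading off the integer crossing times'' of a path, and then show that $P_{\cf}^{<}$ is a homeomorphism of $D_{\cf}$ onto the subspace of piecewise linear directed paths, which is a strong deformation retract of $\vec{P}_<(X_{\cf})_{\bzero}^{\bbk+\bone}$. Given $\mb{p}=(p_1,\dots,p_n)\in\vec{P}_<(X_{\cf})_{\bzero}^{\bbk+\bone}$, each component $p_i\colon I\to[0,k_i+1]$ is an increasing homeomorphism (strictly increasing, continuous, $p_i(0)=0$, $p_i(1)=k_i+1$), so $x_{ij}:=p_i^{-1}(j)$ is well defined for $1\le i\le n$, $1\le j\le k_i$, with $0<x_{i1}<\dots<x_{ik_i}<1$; set $Q(\mb{p})=\mb{x}=(x_{ij})\in\intd_{\bbk}$. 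First I would check $\mb{x}\in D_{\cf}$: for $t\in(0,1)$ one has $p_i(t)\in\Int$ iff $t\in\{x_{i1},\dots,x_{ik_i}\}$, so if $\mb{x}\in G_{(F,\bj)}$ for some $(F,\bj)\in S_{\cf}$, then the common value $t:=x_{i\bj(i)}$ ($i\in F$) lies in $(0,1)$ and satisfies $p_i(t)=\bj(i)\in\Int$ for all $i\in F$, i.e.\ $\mb{p}(t)\in Y_{\cf}$, contradicting $\mb{p}\in\vec{P}_<(X_{\cf})_{\bzero}^{\bbk+\bone}$. Continuity of $Q$ is one of the two points needing (routine) care: it suffices that $\mb{p}\mapsto p_i^{-1}(j)$ be continuous for the compact--open topology, which for $I$ compact is uniform convergence, and if $\mb{p}^{(m)}\to\mb{p}$ uniformly and $t_m:=(p_i^{(m)})^{-1}(j)$, then any accumulation point $t$ of $(t_m)$ satisfies $p_i(t)=\lim_m p_i^{(m)}(t_m)=j$, so $t=p_i^{-1}(j)$ and hence $t_m\to p_i^{-1}(j)$.

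Next I would record $Q\circ P_{\cf}^{<}=\mathrm{id}_{D_{\cf}}$: by construction the piecewise linear path $P_{\cf}^{<}(\mb{x})=\mb{p}(\mb{x})$ attains integer height $j$ in coordinate $i$ exactly at time $x_{ij}$, so its crossing vector is $\mb{x}$ again. In particular $P_{\cf}^{<}$ is a continuous injection with continuous left inverse, hence a homeomorphism of $D_{\cf}$ onto its image, the set of piecewise linear directed paths whose crossing vector lies in $D_{\cf}$.

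The main point is $P_{\cf}^{<}\circ Q\simeq\mathrm{id}$, which I would do with the straight--line homotopy
\[
  H(\mb{p},s)=(1-s)\,\mb{p}+s\,P_{\cf}^{<}(Q(\mb{p})),\qquad s\in[0,1].
\]
Continuity of $H$ is immediate since addition and scalar multiplication of paths are continuous for the compact--open topology, $Q$ is continuous by the above, and $P_{\cf}^{<}$ is continuous (being the restriction to $D_{\cf}$ of $\mb{x}\mapsto\mb{p}(\mb{x})$). The real work is to check $H(\mb{p},s)$ never leaves $\vec{P}_<(X_{\cf})_{\bzero}^{\bbk+\bone}$, and for this I would prove the sublemma: if $\mb{p},\mb{q}\in\vec{P}_<(\mb{R}^n)_{\bzero}^{\bbk+\bone}$ have the same integer crossing vector, $p_i^{-1}(j)=q_i^{-1}(j)=x_{ij}$ for all $i,j$, then every convex combination $\mb{r}_s=(1-s)\mb{p}+s\mb{q}$ again lies in $\vec{P}_<(\mb{R}^n)_{\bzero}^{\bbk+\bone}$ and has crossing vector $\mb{x}$. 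Indeed each $(r_s)_i$ is again a strictly increasing continuous function $I\to[0,k_i+1]$ with the correct endpoint values; and for $t$ in an open gap $(x_{i,j-1},x_{ij})$, $1\le j\le k_i+1$ (with $x_{i0}=0$, $x_{i,k_i+1}=1$), both $p_i(t)$ and $q_i(t)$ lie in $(j-1,j)$, hence so does $(r_s)_i(t)$, which is therefore not an integer, while $(r_s)_i(x_{ij})=j$. Applying this with $\mb{q}=P_{\cf}^{<}(Q(\mb{p}))$ — which has crossing vector $Q(\mb{p})$ by the previous paragraph — every $H(\mb{p},s)$ has crossing vector $\mb{x}=Q(\mb{p})\in D_{\cf}$, and the argument of the first paragraph (an interior point of $H(\mb{p},s)$ lying in $Y_{\cf}$ would force $\mb{x}\in E_{\cf}$) shows $H(\mb{p},s)\in\vec{P}_<(X_{\cf})_{\bzero}^{\bbk+\bone}$. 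Since $H(\cdot,0)=\mathrm{id}$, $H(\cdot,1)=P_{\cf}^{<}\circ Q$, and $H(\mb{p},s)=\mb{p}$ whenever $\mb{p}$ is already piecewise linear, $H$ is in fact a strong deformation retraction onto $\mathrm{im}(P_{\cf}^{<})$, so $P_{\cf}^{<}$ and $Q$ are mutually inverse homotopy equivalences.

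I expect the only genuine obstacles to be the two ``care'' points: continuity of the crossing--time map $Q$ (elementary, but it relies on the components being honest homeomorphisms of $I$), and the verification that the straight--line homotopy preserves the path space — that is, the crossing--vector preservation sublemma. Everything else is bookkeeping. (This lemma, combined with the second lemma asserting that the inclusion $\vec{P}_<(X_{\cf})_{\bzero}^{\bbk+\bone}\hookrightarrow\vec{P}(X_{\cf})_{\bzero}^{\bbk+\bone}$ is a homotopy equivalence, yields Proposition~\ref{prop:homeq}.)
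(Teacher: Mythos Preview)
Your proposal is correct and follows essentially the same approach as the paper: define the inverse $Q$ by reading off integer crossing times, note $Q\circ P_{\cf}^{<}=\mathrm{id}$, and use the straight-line homotopy to show $P_{\cf}^{<}\circ Q\simeq\mathrm{id}$, checking that convex combinations of paths with the same crossing vector retain that crossing vector and hence stay in $\vec{P}_<(X_{\cf})_{\bzero}^{\bbk+\bone}$. Your write-up is in fact more careful than the paper's on the two points you flag (continuity of $Q$ and preservation of the path space under the homotopy), and your additional observation that $H$ is a strong deformation retraction is a harmless bonus.
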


\begin{proof}
  Define a reverse continuous map
  $Q:\vec{P}_<(\mb{R}^n)_{\mb{0}}^{\mb{k}+\mb{1}}\to\intr{\Delta}_{\mb{k}}$
  as follows: For
  $\mb{p}=(p_1,\dots ,p_n) \in
  \vec{P}_<(\mb{R}^n)_{\mb{0}}^{\mb{k}+\mb{1}}$
  such that $p_j(x_{ij_{i}})=j_i$ let
  $Q(\mb{p})=(x_{11},\dots ,x_{1k_1}; \dots ; x_{n1},\dots
  ,x_{nk_n})$.
  Remark that $Q$ is well-defined and continuous since every $p_i$ is
  a homeomorphism; and that $Q$ cannot be extended to the space
  $\vec{P}(\mb{R}^n)_{\mb{0}}^{\mb{k}+\mb{1}}$ of non-decreasing
  directed paths. Remark moreover that $Q$ restricts to a map
  $Q_{\mc{F}}:\vec{P}_<(X_{\mc{F}})_{\mb{0}}^{\mb{k}+\mb{1}}\to
  D_{\mc{F}}$.

  It is obvious from the definitions that $Q\circ P$ is the identity
  map on $\intr{\Delta}_{\mb{k}}$ and hence that
  $Q_{\mc{F}}\circ P_{\mc{F}}$ is the identity map on
  $D_{\mc{F}}$. The map
  $P\circ
  Q:\vec{P}_<(\mb{R}^n)_{\mb{0}}^{\mb{k}+\mb{1}}\to\vec{P}_<(\mb{R}^n)_{\mb{0}}^{\mb{k}+\mb{1}}$
  has the property:
  $((P\circ Q)(\mb{p}))_i(p_i^{-1}(j))=j=p_i(p_i^{-1}(j))$ and
  $((P\circ Q)(\mb{p}))_i(t)\not\in\mb{Z}$ for
  $\mb{p}\in\vec{P}_<(\mb{R}^n)_{\mb{0}}^{\mb{k}+\mb{1}}$ and
  $t\not\in Q_i(\mb{p})\cup\{ 0,1\}$. That same property holds for all
  directed paths in the linear homotopy on
  $\vec{P}_<(\mb{R}^n)_{\mb{0}}^{\mb{k}+\mb{1}}$ given by
  $s\mapsto (1-s)\mb{p}+s(P\circ Q)(\mb{p}), 0\le s\le 1$. Hence
  $P\circ Q$ restricts to a map
  $P_{\mc{F}}\circ
  Q_{\mc{F}}:\vec{P}_<(X_{\mc{F}})_{\mb{0}}^{\mb{k}+\mb{1}}\to\vec{P}_<(X_{\mc{F}})_{\mb{0}}^{\mb{k}+\mb{1}}$
  that is homotopic to the identity map on
  $\vec{P}_<(X_{\mc{F}})_{\mb{0}}^{\mb{k}+\mb{1}}$.
\end{proof}

\begin{lemma}\label{lemma:incl}
  The inclusion map
  $i: \vec{P}_<(X_{\mc{F}})_{\mb{0}}^{\mb{k}+\mb{1}}
  \hookrightarrow\vec{P}(X_{\mc{F}})_{\mb{0}}^{\mb{k}+\mb{1}}$
  is a homotopy equivalence for every positive integer vector
  $\mb{k}$.
\end{lemma}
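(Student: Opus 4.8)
The plan is to show that the inclusion $i\colon \vec{P}_<(X_{\mc{F}})_{\mb{0}}^{\mb{k}+\mb{1}} \hookrightarrow \vec{P}(X_{\mc{F}})_{\mb{0}}^{\mb{k}+\mb{1}}$ admits a deformation retraction, constructed by a reparametrization procedure that ``spreads out'' the constant stretches of a non-decreasing path until every coordinate becomes strictly increasing, while never pushing any coordinate onto an integer value that would enter the forbidden set $Y_{\mc{F}}$. First I would recall that a directed path $\mb{p}=(p_1,\dots,p_n)$ lies in $\vec{P}(X_{\mc{F}})_{\mb{0}}^{\mb{k}+\mb{1}}$ precisely when for every $t\in(0,1)$ there is no $F\in\mc{F}$ with $p_i(t)\in\mb{Z}$ for all $i\in F$; the key observation is that this is a \emph{closed} condition on the finite set of parameter values $t$ at which any $p_i$ passes through an integer, so small monotone reparametrizations of the individual components preserve membership.

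The main construction I would carry out is the following. Fix $\mb{p}\in\vec{P}(X_{\mc{F}})_{\mb{0}}^{\mb{k}+\mb{1}}$. For each coordinate $i$ and each integer level $\ell\in[k_i]$, the preimage $p_i^{-1}(\ell)$ is a closed subinterval $[a_{i\ell},b_{i\ell}]\subset(0,1)$ (possibly a point). Choosing a continuous selection $t_{i\ell}(\mb{p})\in[a_{i\ell},b_{i\ell}]$ — e.g. the midpoint, or better the point that maximizes the distance to the nearest other ``bad'' level-crossing in the same fibre, so as to stay away from $E_{\mc{F}}$ — one obtains for each $\mb{p}$ a point $Q(\mb{p})\in\intr{\Delta}_{\mb{k}}$ (this extends the map $Q$ of the previous lemma to all of $\vec{P}(\mb{R}^n)_{\mb{0}}^{\mb{k}+\mb{1}}$, where it is no longer continuous as a function into $\intr\Delta_{\mb{k}}$ but becomes continuous after composing with $P$). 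Then the straight-line homotopy $H(\mb{p},s)=(1-s)\,\mb{p}+s\,(P\circ Q)(\mb{p})$, $0\le s\le 1$, deforms $\mb{p}$ to a strictly increasing directed path; I would check that for every $s$ and every $i$ the value $H(\mb{p},s)_i(t)$ equals $\ell$ only when $t=t_{i\ell}(\mb{p})$, so that the whole homotopy stays inside $\vec{P}(X_{\mc{F}})_{\mb{0}}^{\mb{k}+\mb{1}}$, exactly as in the proof of the previous lemma. Restricting $s$ to $\{0\}$ recovers $i$, restricting $H(\cdot,1)$ gives a map into $\vec{P}_<(X_{\mc{F}})_{\mb{0}}^{\mb{k}+\mb{1}}$ which, composed with $i$, is homotopic to the identity by $H$ itself; and on $\vec{P}_<$ the map $H(\cdot,1)=P\circ Q$ is homotopic to the identity by the previous lemma's argument. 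Hence $i$ is a homotopy equivalence.

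The hard part will be continuity of the selection $\mb{p}\mapsto Q(\mb{p})$, or rather of $\mb{p}\mapsto (P\circ Q)(\mb{p})$, in the compact-open topology: the endpoints $a_{i\ell}(\mb{p}),b_{i\ell}(\mb{p})$ of the level sets vary only upper/lower semicontinuously in $\mb{p}$ (a plateau can appear or shrink to a point under a small perturbation), so a naive midpoint choice is not continuous as a map to $\intr\Delta_{\mb{k}}$. The resolution is that $P\circ Q$ \emph{is} continuous even though $Q$ alone is not: the resulting path depends continuously on $\mb{p}$ because as a plateau $[a_{i\ell},b_{i\ell}]$ degenerates the two line segments of $P\circ Q(\mb{p})$ adjacent to level $\ell$ merge continuously. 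I would make this precise with an explicit uniform-continuity estimate, or alternatively avoid the issue entirely by a cleaner argument: reparametrize each component $p_i$ directly. Concretely, replace $p_i$ by $\phi_{\mb{p}}\circ p_i$ where $\phi_{\mb{p}}\colon[0,k_i+1]\to[0,k_i+1]$ is a homeomorphism depending continuously on $\mb{p}$ that is the identity near the integers and stretches plateaus; but this does not make $p_i$ strictly increasing if $p_i$ itself was constant on an interval. The genuinely correct fix, and the one I would write up, is: first precompose $\mb{p}$ with a single parameter-reparametrization $\psi_{\mb{p}}\colon I\to I$ (continuous in $\mb{p}$) chosen so that each component becomes piecewise-linear-between-its-integer-crossings, then apply the interpolation map $P$ of Proposition-style construction to the selected crossing times; with piecewise-linear components the level sets $p_i^{-1}(\ell)$ are honest points (or the degeneracy is controlled), and continuity of the composite is then routine. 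Once continuity is established, the verification that the homotopy avoids $Y_{\mc{F}}$ is a direct copy of the argument given for the preceding lemma.
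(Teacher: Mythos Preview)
Your approach has a genuine gap, and it also misses a much simpler idea that the paper uses.

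\textbf{The gap.} Your central claim that ``$P\circ Q$ is continuous even though $Q$ alone is not'' is false. Take $n=1$, $k_1=1$ (so there is no forbidden region), and consider a path $p$ with $p^{-1}(1)=[1/2,3/4]$; the midpoint selection gives $Q(p)=5/8$, so $(P\circ Q)(p)$ is the broken line through $(5/8,1)$. Now perturb $p$ to a nearby strictly increasing $p_\varepsilon$ with $p_\varepsilon^{-1}(1)=\{1/2\}$; then $(P\circ Q)(p_\varepsilon)$ is the broken line through $(1/2,1)$. As $\varepsilon\to 0$ the paths $p_\varepsilon$ converge uniformly to $p$, but $(P\circ Q)(p_\varepsilon)$ does not converge to $(P\circ Q)(p)$. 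The same phenomenon occurs for any selection rule $t_{i\ell}\in[a_{i\ell},b_{i\ell}]$: the plateau can collapse to either endpoint under arbitrarily small perturbations, so no continuous selection exists, and the discontinuity survives composition with $P$. Your proposed repairs do not help: precomposing $\mb{p}$ with a domain reparametrization $\psi_{\mb{p}}$ cannot remove a genuine plateau of $p_i$, since $p_i\circ\psi_{\mb{p}}$ is still constant on $\psi_{\mb{p}}^{-1}([a_{i\ell},b_{i\ell}])$.

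\textbf{The paper's approach.} The paper avoids all of this by never attempting to extend $Q$. Instead it takes the convex combination of $\mb{p}$ with the straight diagonal path $\delta_{\mb{k}}(t)=t(\mb{k}+\mb{1})$: for any $s>0$ the path $(1-s)\mb{p}+s\,\delta_{\mb{k}}$ is \emph{strictly} increasing in every coordinate, simply because $\delta_{\mb{k}}$ is. One then only needs to choose $s=s(\mb{p})>0$ small enough, and depending continuously on $\mb{p}$, so that the resulting path still avoids $Y_{\mc{F}}$; this follows from the positive distance between the compact sets $\mb{p}(I)$ and $Y_{\mc{F}}\cap[\mb{0},\mb{k}+\mb{1}]$. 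The map $r(\mb{p})=(1-s(\mb{p}))\mb{p}+s(\mb{p})\delta_{\mb{k}}$ is then manifestly continuous, and the linear homotopy $(\mb{p},t)\mapsto (1-ts(\mb{p}))\mb{p}+ts(\mb{p})\delta_{\mb{k}}$ shows that $i\circ r$ and $r\circ i$ are homotopic to the respective identities. This sidesteps the selection problem entirely.
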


\begin{proof}
  Let $\delta_{\mb{k}}\in\vec{P}_<(\mb{R}^n)_{\mb{0}}^{\mb{k}+\mb{1}}$
  denote the linear path given by
  $\delta_{\mb{k}}(t)=t(\mb{k}+\mb{1})$. Then, for every
  $\mb{p}\in\vec{P}(\mb{R}^n)_{\mb{0}}^{\mb{k}+\mb{1}}$ and
  $0<s\le 1$, the convex combination
  $\mb{p}_s:=(1-s)\mb{p}+s\delta_{\mb{k}}$ is \emph{strictly}
  increasing and hence contained in
  $\vec{P}_<(\mb{R}^n)_{\mb{0}}^{\mb{k}+\mb{1}}$. For a given
  $\mb{p}\in\vec{P}_<(X_{\mc{F}})_{\mb{0}}^{\mb{k}+\mb{1}}$, we want
  to choose $s>0$ small enough to ensure that $\mb{p}_s$ avoids
  $Y_{\mc{F}}$ (see (\ref{d:yf})) and hence so that $\mb{p}_s$ is
  contained in $\vec{P}_<(X_{\mc{F}})_{\mb{0}}^{\mb{k}+\mb{1}}$; and
  this in a way that makes the parameter $s$ depend continuously on
  the path $\mb{p}$.

  Fix a norm and the associated metric $d$ on $\mb{R}^n$, e.g., the
  box norm. For every path
  $\mb{p}\in\vec{P}_<(X_{\mc{F}})_{\mb{0}}^{\mb{k}+\mb{1}}$, the
  spaces $\mb{p}(I)$ and $Y_{\mc{F}}\cap [\mb{0},\mb{k}+\mb{1}]$ are
  disjoint closed and hence compact subspaces of
  $[\mb{0},\mb{k}+\mb{1}]$ with a positive distance
  $d(\mb{p}):=\max_{t\in I}(d(\mb{p}(t),Y_{\mc{F}})$ depending
  continuously on $\mb{p}$. Let $K:=\max_1^nk_i$ and let
  $s(\mb{p})=\frac{d(\mb{p})}{K}$. Then, for every
  $\mb{p}\in\vec{P}_<(X_{\mc{F}})_{\mb{0}}^{\mb{k}+\mb{1}}$ one
  obtains:
  $d(\mb{p},\mb{p}_s)=s(\mb{p})\parallel\mb{p}-\delta_{\mb{k}}\parallel<d(\mb{p})$
  and in particular $d(\mb{p}_s,Y_{\mc{F}})>0$.

  Let
  $i:\vec{P}_<(X_{\mc{F}})_{\mb{0}}^{\mb{k}+\mb{1}}\to\vec{P}(X_{\mc{F}})_{\mb{0}}^{\mb{k}+\mb{1}}$
  denote the inclusion map, and let
  $r:\vec{P}(X_{\mc{F}})_{\mb{0}}^{\mb{k}+\mb{1}}\to\vec{P}_<(X_{\mc{F}})_{\mb{0}}^{\mb{k}+\mb{1}}$
  denote the continuous map given by
  $r(\mb{p})=(1-s(\mb{p}))\mb{p}+s(\mb{p})\delta_{\mb{k}}$. The
  continuous map
  $R:\vec{P}(X_{\mc{F}})_{\mb{0}}^{\mb{k}+\mb{1}}\times I\to
  \vec{P}(X_{\mc{F}})_{\mb{0}}^{\mb{k}+\mb{1}}$
  given by $R(\mb{p},t)=(1-ts(\mb{p}))\mb{p}+ts(\mb{p})\delta_{\mb{k}}$
  is a homotopy between the identity and $i\circ r$; its restriction
  to $\vec{P}_<(X_{\mc{F}})_{\mb{0}}^{\mb{k}+\mb{1}}$ is a homotopy
  between the identity and $r\circ i$.
\end{proof}

\noindent \emph{Remark:}
A variant of the proof above shows that spaces of increasing and of
non-decreasing directed paths (as they arise in models for concurrency
theory) are homotopy equivalent in a more general context.

\section{The Homology of $D_{\cf}$}
\label{s:wedge}
In this section we state Theorem \ref{homgamma1} that describes the
homotopy type of the Alexander dual of $D_{\cf}$ in the one-point
compactification of $\intd_{\bbk}$ -- a sphere of dimension
$N=\sum_{i=1}^n k_i$. This result is then used to prove Theorem
\ref{poin1}.  Our main observation is the following homotopy
decomposition of $\ehat$ (see (\ref{d:bfk}) and Definitions
\ref{d:comb}(i) and \ref{d:simplex}(iii)).
\begin{theorem}
\label{homgamma1}
\begin{equation}
\label{decom}
\ehat \simeq \bigvee_{0 \neq \bbm \in \Nat^{\cf}} \bigvee^{\bfk(\bbm)} S^{N-c_{\cf}(\bbm)} \ast
\underset{F \in \cf}{\varhexstar}  \Delta(M(\cf)_{\prec F})^{*\bbm(F)}.
\end{equation}
\end{theorem}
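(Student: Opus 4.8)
The plan is to recognize $\ehat$ as the one-point compactification of a union of convex sets and to deduce the decomposition from a Ziegler--\v{Z}ivaljevi\'{c}-type wedge lemma, after which everything reduces to combinatorics of the intersection poset of the arrangement $\{G_{(F,\bj)}:(F,\bj)\in S_\cf\}$. Each $G_{(F,\bj)}$ is the intersection of the bounded open convex set $\intd_\bbk$ with a linear subspace of $\Rea^N$, so every nonempty intersection $G_x=\bigcap_s G_{(F_s,\bj_s)}$ is a bounded convex set that is relatively open in its affine hull; hence $G_x\cong\Rea^{\dim G_x}$ and $\widehat{G_x}\cong S^{\dim G_x}$. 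Letting $L$ be the intersection poset of this arrangement, ordered by reverse inclusion, with a formal bottom $\hat 0$ corresponding to $\intd_\bbk$, the first step is to prove
\[
\ehat\ \simeq\ \bigvee_{\hat 0<x\in L} S^{\dim G_x}\ast\Delta(\hat 0,x),
\]
where $(\hat 0,x)$ denotes the open interval below $x$ in $L$ and $\Delta$ the order complex. This is the wedge decomposition of the one-point compactification of a subspace arrangement; the point requiring care -- and the step I expect to be the main obstacle -- is that the $G_{(F,\bj)}$ are not linear subspaces but merely convex sets relatively open in affine subspaces. I would make the wedge lemma applicable either by a straightening argument exhibiting the arrangement as combinatorially, hence homotopically, equivalent to a genuine linear subspace arrangement, or by checking directly that the diagram $x\mapsto\widehat{G_x}$ over $L$ is cofibrant enough for the projection lemma; the compactifications are well behaved since $\intd_\bbk$ is locally compact and each $G_{(F,\bj)}$ and every intersection is closed in it.

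Next I would identify $L$ and its intervals combinatorially. Since the coordinates of a point of $\intd_\bbk$ coincide only across distinct directions, a nonempty $G_x$ corresponds to a set partition $\pi$ of a subset of the coordinate slots $(i,j)$, $1\le i\le n$, $1\le j\le k_i$, in which each block meets each direction at most once, each block of size $\ge2$ covers a direction set in $\cf$ (using upward closure of $\cf$), and -- the condition equivalent to $G_x\ne\emptyset$ -- the order relation on blocks induced by $x_{i1}<\dots<x_{ik_i}$ is acyclic. Writing the non-singleton blocks as slot-disjoint partial sequences $(D_1,\bj_1),\dots,(D_p,\bj_p)$ with $D_r\in\cf$, one has $\dim G_x=N-\sum_r(|D_r|-1)$. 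The $\cf$-admissible partitions refining $\pi$ form a product of posets over the $p$ non-singleton blocks, and for a block $B$ the poset of admissible partitions of $B$ is order-isomorphic -- via the correspondence between such partitions and matchings of $\cf$ contained in $D_B$ -- to $M(\cf)_{\preceq D_B}$ with a bottom adjoined and top $\{D_B\}$, so that its proper part is $M(\cf)_{\prec D_B}$. Feeding this into the standard fact that the order complex of the proper part of a $p$-fold product of bounded posets is the $(p-1)$-fold suspension (that is, $S^{p-2}\ast(-)$) of the join of the proper parts of the factors, one gets $\Delta(\hat 0,x)\simeq S^{p-2}\ast\underset{r}{\varhexstar}\,\Delta(M(\cf)_{\prec D_r})$. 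Combining with $\dim G_x$ via $S^a\ast S^b=S^{a+b+1}$ and setting $\bbm(F)=\#\{r:D_r=F\}$, the contribution of $x$ becomes exactly $S^{N-c_\cf(\bbm)}\ast\underset{F\in\cf}{\varhexstar}\,\Delta(M(\cf)_{\prec F})^{\ast\bbm(F)}$, the bookkeeping identity $\dim G_x+p-1=N-c_\cf(\bbm)$ being immediate from the definitions of $\dim G_x$ and $c_\cf$.

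Finally I would compute the multiplicities: for fixed $\bbm$ the number of $x\in L$ giving this $\bbm$ is the number of unordered slot-disjoint families $\{(D_r,\bj_r)\}$ whose multiset of direction sets is $\bbm$ and whose induced block order is acyclic. This is where $T_\cf(\bbm)$ and acyclic orientations enter. A valid \emph{labelled} family induces an acyclic orientation of $T_\cf(\bbm)$, since two blocks with intersecting direction sets must be comparable, and consistently so along every common direction; conversely, given an acyclic orientation $\alpha$, the blocks through a fixed direction $i$ form a clique of $T_\cf(\bbm)$ on which $\alpha$ restricts to a linear order, leaving exactly $\binom{k_i}{\sum_{F\ni i}\bbm(F)}$ ways to place those blocks among the positions of $[k_i]$ compatibly, independently over $i$. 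Hence there are $a(T_\cf(\bbm))\prod_i\binom{k_i}{\sum_{F\ni i}\bbm(F)}$ labelled families, and dividing by $\prod_F\bbm(F)!$ for the permutations of the distinct copies of each $F$ yields $\bfk(\bbm)$. Summing the wedge over $x\in L\setminus\{\hat 0\}$, grouped by the value of $\bbm$, produces (\ref{decom}).
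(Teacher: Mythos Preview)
Your proposal is correct and follows essentially the same route as the paper: apply the Ziegler--\v{Z}ivaljevi\'{c} Wedge Lemma to the cover $\{\Gamma_{(F,\bj)}\}$, identify the elements of the intersection poset with separated acyclic families, compute the lower intervals via Walker's product decomposition to obtain the join of the $\Delta(M(\cf)_{\prec F_\ell})$ with an $S^{r-2}$, and count the elements with a given multiplicity vector $\bbm$ via the bijection with acyclic orientations of $T_\cf(\bbm)$ together with the slot choices.

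The one place where the paper is more direct than your sketch is precisely the step you flag as the main obstacle. You propose either a straightening argument or a cofibrancy check to make the Wedge Lemma applicable to this non-linear arrangement. The paper instead observes that for every acyclic $R$ the compactified intersection $\Gamma_R$ is \emph{homeomorphic} to a sphere $S^{N-\sum(|F|-1)}$ (pointed at $\infty$), proved by an explicit radial scaling from an interior point of $G_R$ onto a disk in the linear subspace $V_R$. Then each inclusion $U_p\hookrightarrow U_q$ for $q<p$ is a pointed embedding of a sphere into a higher-dimensional sphere, hence null-homotopic, and the Wedge Lemma applies immediately without any straightening or diagram-cofibrancy argument. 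This shortcut is worth knowing, since it replaces the delicate general machinery you anticipated with a one-line topological observation.
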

\noindent
The proof of Theorem \ref{homgamma1} is deferred to Section \ref{s:hoehat}.
\ \\ \\
{\bf Proof of Theorem \ref{poin1}:} (i) If the integral
homology $\thh_*\left(\Delta(M(\cf)_{\prec F});\Int\right)$ is free
for all $F \in \cf$ , then (\ref{decom}) implies that
$\thh_*(\ehat;\Int)$ is free.  Recalling that
$\widehat{\intd}_{\bbk}\cong S^N$, it follows by Alexander duality
that for all $\ell$
\begin{equation*}
\begin{split}
  \thh_{\ell}(D_{\cf};\Int) &=
  \tilde{H}_{\ell}(\intd_{\bbk}-E_{\cf};\Int) \\ &=
  \tilde{H}_{\ell}(\widehat{\intd}_{\bbk}-\ehat;\Int) \cong
  \thh_{N-\ell-1}(\ehat;\Int).
\end{split}
\end{equation*}
Therefore $\thh_{\ell}(D_{\cf};\Int)$ is free.
\\
(ii) Recall that the behavior of the reduced Poincar\'{e} series
$f_{\KK}(\cdot)$ (cf.~(\ref{eq:Poiser})); as a consequence, with
respect to the wedge and join operations is given by
\begin{equation}
\label{wj}
\begin{split}
f_{\KK}(Y_1 \vee Y_2,t)&=f_{\KK}(Y_1,t)+f_{\KK}(Y_2,t)~~,\\
f_{\KK}(Y_1*Y_2,t)&=f_{\KK}(Y_1,t)f_{\KK}(Y_2,t).
\end{split}
\end{equation}
Furthermore, if $Y$ is a subcomplex of $S^N$ then by Alexander duality
\begin{equation}
\label{alexander}
f_{\KK}(S^N-Y,t)=t^{N+1} f_{\KK}(Y,t^{-1}).
\end{equation}
Theorem \ref{homgamma1} together with (\ref{wj}) imply that for any
field $\KK$
\begin{equation}
\label{pehat}
f_{\KK}(\ehat,t)=\sum_{0 \neq \bbm \in \Nat^{\cf}}  \bfk(\bbm)t^{N-c_{\cf}(\bbm)+1}
\prod_{F \in \cf} f_{\KK}(\Delta(M(\cf)_{\prec F}),t)^{\bbm(F)}.
\end{equation}
Combining Proposition \ref{prop:homeq} with (\ref{alexander}) and
(\ref{pehat}) it follows that
\begin{equation*}
\begin{split}
  f_{\KK}\left(\vec{P}(X_{\cf})_{\bzero}^{\bbk+\bone},t\right)
  &=f_{\KK}\left(D_{\cf},t\right)=
  t^{N+1}f_{\KK}\left(\ehat,t^{-1}\right) \\ &=\sum_{0 \neq \bbm \in
    \Nat^{\cf}} \bfk(\bbm)t^{c_{\cf}(\bbm)} \prod_{F \in \cf}
  f_{\KK}\left(\Delta(M(\cf)_{\prec F}),t^{-1}\right)^{\bbm(F)}.
\end{split}
\end{equation*}
{\enp}

\section{Homotopy Decomposition of $\ehat$}
\label{s:hoehat}

In this Section we prove Theorem \ref{homgamma1}. Our basic approach
is to apply the Wedge Lemma of Ziegler and \v{Z}ivaljevi\'{c}
\cite{ZZ93} to the cover $\{\Gamma_{(F,\bj)}:(F,\bj) \in S_{\cf}\}$ of
$\ehat$. The actual proof depends on a number of preliminary
results. For notations, we refer the reader to Definition
\ref{d:simplex}.
\begin{definition}
\label{d:sepfam}
Let $R \subset S_{\cf}$.
\begin{itemize}
\item[(i)]
Let
$$G_R=\bigcap_{(F,\bj) \in R} G_{(F,\bj)} ~~~,~~~
\Gamma_R=\bigcap_{(F,\bj) \in R} \Gamma_{(F,\bj)}=G_R
\cup\{\infty\}.$$
\item[(ii)]
$R$ is
\emph{separated} if $\bj(i) \neq \bj'(i)$ for any
$(F,\bj) \neq (F',\bj') \in R$ and $i \in F \cap F'$.
\end{itemize}
\end{definition}


For separated families $R \subset S_{\cf}$ it is sometimes useful to
represent $G_R$ by a diagram with $n$ rows such that the $i$-th row
contains the coordinates $x_{i1}<\cdots <x_{ik_i}$ of $\intd_{k_i}$,
and such that $x_{ij}, x_{i'j'}$ are connected by a dashed line iff
$x_{ij}=x_{i'j'}$ for all $\mb{x} \in G_R$, i.e. iff there exists an
$(F,\bj) \in R$ such that $i,i' \in F$ and $\bj(i)=j$ and
$\bj(i')=j'$.
\begin{example}
\label{ffig1}
Let $k_1=k_2=k_3=2$ and let $R=\{(F_i,\bj_i)\}_{i=1}^3$ where
$F_1=\{1,2\}$, $F_2=\{2,3\}$, $F_3=\{1,3\}$ and
$(\bj_1(1),\bj_1(2))=(1,1)$, $(\bj_2(2),\bj_2(3))=(2,1)$,
$(\bj_3(1),\bj_3(3))=(2,2)$. The diagram of $G_R$ is depicted in
Figure \ref{figure2}.
\end{example}

\begin{figure}
\begin{center}
  {\label{fig:gr}
  \scalebox{0.6}{\input{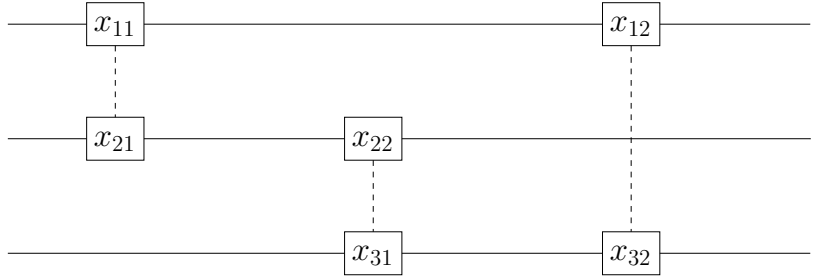}}}
  \caption{Diagram of $G_R$ (cf. Example \ref{ffig1})}
  \label{figure2}
\end{center}
\end{figure}

\begin{definition}
\label{d:kr}
For $R \subset S_{\cf}$
let $K_R$ be the directed graph on the vertex set $R$ with edges
$(F,\bj) \rightarrow (F',\bj')$, where $(F,\bj)$ and $(F',\bj')$ are
distinct elements of $R$ that satisfy $F \cap F' \neq \emptyset$ and
$\bj(i)< \bj'(i)$ for all $i \in F \cap F'$.
The family $R\subset S_{\cf}$ is
\emph{acyclic} if $R$ is separated and if $K_R$ does not contain
directed cycles.  Let $A_{\cf}$ denote the set of all acyclic
subfamilies of $S_{\cf}$.
\end{definition}
\noindent
The next two Propositions describe some properties of $\Gamma_R$ for separated families $R$.
\begin{proposition}
\label{sgam1}
Let $R \subset S_{\cf}$ be a separated family. Then:
\\
(i) If $R \not\in A_{\cf}$  then $\Gamma_R=\{\infty\}$.
\\
(ii) If $R \in A_{\cf}$ then there is a homeomorphism
\begin{equation}
\label{gammar}
\Gamma_R \cong S^{N-\sum_{(F,\bj) \in R} (|F|-1)}.
\end{equation}

\end{proposition}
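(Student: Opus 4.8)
The plan is to view $G_R$ as the intersection $L_R\cap\intd_{\bbk}$, where $L_R\subseteq\Rea^N$ is the linear subspace defined by the equations $x_{i\bj(i)}=x_{i'\bj'(i')}$ read off from the blocks, i.e. $x_{i\bj(i)}=x_{i'\bj(i')}$ over all $(F,\bj)\in R$ and $i,i'\in F$, and to extract everything from the convexity of $G_R$ together with a \emph{height function}. For $\mb{x}\in G_R$ and $(F,\bj)\in R$ put $h_{\mb{x}}(F,\bj):=x_{i\bj(i)}$; this is well defined because the value does not depend on $i\in F$. The observation driving both parts is that any edge $(F,\bj)\to(F',\bj')$ of $K_R$ forces $h_{\mb{x}}(F,\bj)<h_{\mb{x}}(F',\bj')$: picking $i\in F\cap F'$ witnessing the edge we get $h_{\mb{x}}(F,\bj)=x_{i\bj(i)}<x_{i\bj'(i)}=h_{\mb{x}}(F',\bj')$ since the $i$-th block $0<x_{i1}<\dots<x_{ik_i}<1$ is strictly increasing.

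Part (i) is then immediate: if $R$ is separated but $R\notin A_{\cf}$, then $K_R$ contains a directed cycle, and concatenating the strict inequalities above around that cycle yields a contradiction. Hence no $\mb{x}\in G_R$ exists, i.e. $G_R=\emptyset$, and therefore $\Gamma_R=G_R\cup\{\infty\}=\{\infty\}$.

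For part (ii), assume $R\in A_{\cf}$ and argue in three steps. First, \emph{dimension}: since $R$ is separated, for distinct $(F,\bj),(F',\bj')\in R$ the coordinate sets $\{x_{i\bj(i)}:i\in F\}$ and $\{x_{i\bj'(i)}:i\in F'\}$ are disjoint (a shared coordinate would force some $i\in F\cap F'$ with $\bj(i)=\bj'(i)$), so the defining equations of $L_R$ split into $|R|$ groups acting on pairwise disjoint coordinates, the group of $(F,\bj)$ contributing exactly $|F|-1$ independent equations; thus $L_R$ is a linear subspace of dimension $d:=N-\sum_{(F,\bj)\in R}(|F|-1)$. Second, \emph{non-emptiness}: acyclicity of $K_R$ lets us produce a point of $G_R$. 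I would form a digraph whose vertices are the blocks of the members of $R$ together with the remaining free coordinates, add inside each row $i$ the arrows forced by the column order $1<\dots<k_i$, and observe that separatedness makes every within-row arrow between two blocks an edge of $K_R$ while the singleton vertices can be bypassed, so the enlarged digraph is still acyclic; a topological sort then supplies pairwise distinct values in $(0,1)$ for its vertices respecting all row orders, i.e. a point $\mb{x}\in G_R$. Third, \emph{topology}: $G_R$ is a non-empty open convex subset of the $d$-dimensional affine space $L_R$, hence homeomorphic to $\Rea^d$; and since $\widehat{\intd}_{\bbk}$ is the one-point compactification of $\intd_{\bbk}$, the subspace $\Gamma_R=G_R\cup\{\infty\}$ carries precisely the one-point compactification topology of the locally compact space $G_R$, so $\Gamma_R\cong S^d=S^{N-\sum_{(F,\bj)\in R}(|F|-1)}$.

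The step I expect to be the main obstacle is the non-emptiness claim in (ii): converting the combinatorial hypothesis that $K_R$ has no directed cycle into an honest point of the open polytope $\intd_{\bbk}$. The care lies in merging the intra-row column orders with the inter-block comparisons coming from $K_R$ into a single acyclic relation --- this is exactly where separatedness of $R$ is genuinely used, to make the within-row comparisons coincide with edges of $K_R$ --- and then reading off a strictly increasing real assignment from a topological sort. The dimension count and the passage to the one-point compactification are routine once non-emptiness is in hand.
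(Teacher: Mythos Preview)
Your argument follows the paper's closely: part~(i) is the same height-along-a-cycle contradiction, and for part~(ii) the paper likewise passes to the linear subspace $V_R$ (your $L_R$), reads off the codimension from separatedness, and asserts non-emptiness of $G_R=V_R\cap\intd_{\bbk}$ with only the words ``one can easily find an element''. The one visible difference is the last step: the paper writes down an explicit radial scaling homeomorphism $\Gamma_R\to D(V_R)/S(V_R)$ from a chosen interior point, whereas you invoke the abstract characterisation of one-point compactifications (this is fine, since $G_R$ is closed in $\intd_{\bbk}$, so the subspace $G_R\cup\{\infty\}$ of $\widehat{\intd}_{\bbk}$ really does inherit the compactification topology).

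The one place to be careful is your non-emptiness argument, specifically the claim that ``separatedness makes every within-row arrow between two blocks an edge of $K_R$''. Under the paper's literal definition of $K_R$ --- which requires $\bj(i)<\bj'(i)$ for \emph{all} $i\in F\cap F'$ --- this fails: with $n=2$, $k_1=k_2=2$, and $R=\{(\{1,2\},(1,2)),(\{1,2\},(2,1))\}$, the family $R$ is separated and $K_R$ has no edges at all (hence is trivially acyclic), yet $G_R=\emptyset$ since the constraints force $x_{11}<x_{12}=x_{21}<x_{22}=x_{11}$. So the paper's hand-wave hides a genuine issue that your more explicit argument exposes. The fix, consistent with the phrase ``for some (and therefore all)'' appearing later in the proof of Proposition~\ref{affm}, is to read the edge condition in $K_R$ as $\bj(i)<\bj'(i)$ for \emph{some} $i\in F\cap F'$; under that reading your within-row arrows are honest edges of $K_R$, mixed comparisons produce $2$-cycles so that such $R$ are excluded from $A_{\cf}$, your topological-sort construction goes through verbatim, and part~(i) is unaffected.
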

\noindent
{\bf Proof:} (i) Let
$$(F_1,\bj_1) \rightarrow \cdots \rightarrow (F_r,\bj_r) \rightarrow (F_1,\bj_1)$$
be a directed cycle in $K_R$. Then there exist
$$i_1 \in F_1 \cap F_2, i_2 \in F_2 \cap F_3,\ldots, i_r \in F_r \cap
F_1$$ such that
\begin{equation}
\label{circuit}
\bj_1(i_1)<\bj_2(i_1)~,~ \bj_2(i_2)<\bj_3(i_2)~,~ \cdots~,~
\bj_r(i_r)<\bj_1(i_r).
\end{equation}
We will show that $G_R=\emptyset$ and hence
$\Gamma_R=\{\infty\}$. Indeed, suppose that
$\left((x_{i,1},\ldots, x_{i,k_i})\right)_{i=1}^n$ is contained in
$G_R$. We conclude from (\ref{circuit}) -- since
$i_j,i_{j+1}\in F_{j+1}, j<r$, and $i_1, i_r \in F_1$:
\begin{equation*}
\begin{split}
x_{i_1,\bj_1(i_1)}&<  x_{i_1,\bj_2(i_1)}= x_{i_2,\bj_2(i_2)}
<x_{i_2,\bj_3(i_2)}=x_{i_3,\bj_3(i_3)}  < \\
\cdots &< x_{i_{r-1},\bj_r(i_{r-1})}=x_{i_r,\bj_r(i_r)}< x_{i_r,\bj_1(i_r)}=x_{i_1,\bj_1(i_1)},
\end{split}
\end{equation*}
a contradiction.
\begin{example}
\label{grempty}
Let $k_1=k_2=k_3=2$ and let $R=\{(F_i,\bj_i)\}_{i=1}^3$ where
$F_1=\{1,2\}$, $F_2=\{2,3\}$, $F_3=\{1,3\}$ and
$(\bj_1(1),\bj_1(2))=(2,1)$, $(\bj_2(2),\bj_2(3))=(2,1)$,
$(\bj_3(1),\bj_3(3))=(1,2)$. Then
$(F_1,\bj_1) \rightarrow (F_2,\bj_2) \rightarrow (F_3,\bj_3)
\rightarrow (F_1,\bj_1)$
is a cycle in $K_R$ and thus $G_R=\emptyset$, as is also clear from
the diagram of $G_R$ in Figure \ref{figure3}.
\end{example}

\begin{figure}
\begin{center}
  \scalebox{0.6}{\input{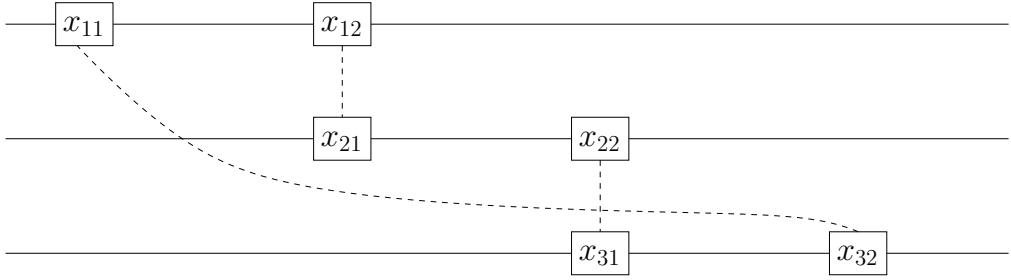}}
  \caption{If $K_R$ has directed cycles then $G_R=\emptyset$ (cf. Example \ref{grempty})}
  \label{figure3}
\end{center}
\end{figure}

\noindent
(ii) For $(F,\bj) \in S_{\cf}$ define
$$
V_{(F,\bj)}= \{(x_{i1},\ldots, x_{ik_i})_{i=1}^n \in \prod_{i=1}^n
\Rea^{k_i}: x_{i\bj(i)}=x_{i'\bj(i')} \text{~for~all~} i,i' \in F\}.
$$
For $R\subset S_{\cf}$, let
$V_R=\bigcap_{(F,\bj) \in R} V_{(F,\bj)}\subset \Rea^N$.
If the family $R$ is separated, then $V_R$ is a linear subspace of
codimension $\sum_{(F,\bj) \in R} (|F|-1)$. If $R$ is acyclic, one can
easily find an element $\mb{x}\in V_R\cap\intr{\Delta}_{\mb{k}}$.

For such a chosen solution $\mb{x}$ and for
$\mb{v}\in S(V_R)=\{\mb{u}\in V_R : \norm{\mb{u}}
=1\}\subset D(V_R)=\{\mb{u}\in V_R: \norm{\mb{u}}\le 1\}$
let
$\alpha (\mb{v})=\min\{ t>0|\
\mb{x}+t\mb{v}\in\partial\Delta_{\mb{k}}\}$;
hence, for $\mb{w}\in\partial\Delta_{\mb{k}}$, one has that
$\alpha(\frac{\mb{w}-\mb{x}}{\norm{\mb{w}-\mb{x}}})=\norm{\mb{w}-\mb{x}}$.
This recipe defines a continuous map $\alpha$ from $S(V_R)$ to the
positive reals since $\alpha (\mb{v})$ is locally obtained as the
minimum among the solutions to a number of linear equations.

We define a (scaling) map
$$\Phi_R:
\Gamma_R=V_R\cap\Delta_{\mb{k}}/_{V_R\cap\partial\Delta_{\mb{k}}}\to
D(V_R)/_{S(V_R)}\cong S^{N-\sum_{(F,\bj) \in R} (|F|-1)}$$
by $$\Phi_R(\mb{w})=
\begin{cases}
  \frac{1}{\alpha(\frac{\mb{w}-\mb{x}}{\norm{\mb{w}-\mb{x}}})}(\mb{w}-\mb{x}) & \mb{w}\neq\mb{x},\\
\mb{0} & \mb{w}=\mb{x}.
\end{cases}$$
The map $\Phi_R$ is indeed a \emph{homeomorphism} with inverse $\Psi_R: D(V_R)/_{S(V_R)}\to\Gamma_R$ given by $$\Psi_R(\mb{v})=
\begin{cases}
  \mb{x}+\alpha(\frac{\mb{v}}{\norm{\mb{v}}})\mb{v} & \mb{v}\neq\mb{0}, \\
  \mb{x} & \mb{v}=\mb{0}.
\end{cases}$$
{\enp}
\begin{proposition}
\label{p:gr}
  Let $R,R' \in A_{\cf}$. Then the following two conditions are
  equivalent:
  \\
  (a) $\Gamma_{R} \subset \Gamma_{R'}$.
  \\
  (b) For any $(F',\bjp) \in R'$ there exists an $(F,\bj) \in R$ such
  that $F' \subset F$ and $\bjp=(\bj)_{|F'}$.
\end{proposition}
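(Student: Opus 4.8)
The plan is to first pass from the one-point compactifications to the open pieces, and then to recognize $G_R$ as a relatively open, nonempty subset of the linear subspace $V_R$ introduced in the proof of Proposition~\ref{sgam1}(ii). Since $G_R,G_{R'}\subset\intd_{\bbk}$ and $\infty\notin\intd_{\bbk}$, the inclusion $\Gamma_R\subset\Gamma_{R'}$ is equivalent to $G_R\subset G_{R'}$, so it suffices to show that $G_R\subset G_{R'}$ is equivalent to (b).

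For (b)$\,\Rightarrow\,$(a) I would simply unwind the definitions. Given $(F',\bjp)\in R'$, pick $(F,\bj)\in R$ with $F'\subset F$ and $\bjp=(\bj)_{|F'}$. Any $\mb{x}\in G_R$ lies in $G_{(F,\bj)}$, so $x_{i\bj(i)}=x_{i'\bj(i')}$ for all $i,i'\in F$; restricting to $i,i'\in F'\subset F$ and using $\bjp=(\bj)_{|F'}$ gives $x_{i\bjp(i)}=x_{i'\bjp(i')}$, i.e. $\mb{x}\in G_{(F',\bjp)}$. As $(F',\bjp)\in R'$ was arbitrary, $\mb{x}\in G_{R'}$, whence $G_R\subset G_{R'}$.

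For (a)$\,\Rightarrow\,$(b) the key point is that $G_R$ detects exactly the forced coincidences of coordinates. Consider the set of coordinate indices $(i,j)$, $1\le i\le n$, $1\le j\le k_i$, and the sets $B_{(F,\bj)}=\{(i,\bj(i)):i\in F\}$ for $(F,\bj)\in R$. That $R$ is separated says precisely that these sets are pairwise disjoint, so together with the singletons they partition the index set; and $V_R$ is, by definition, the linear subspace of those $\mb{x}$ that are constant on each block. Now $G_R=V_R\cap\intd_{\bbk}$ is a nonempty (this is where acyclicity of $R$ enters, via Proposition~\ref{sgam1}(ii)) relatively open subset of $V_R$, hence Zariski dense in $V_R$; consequently a coordinate difference $x_{ij}-x_{i'j'}$ that vanishes identically on $G_R$ vanishes on all of $V_R$, which holds if and only if $(i,j)$ and $(i',j')$ lie in the same block. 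Granting this, fix $(F',\bjp)\in R'$: from $G_R\subset G_{R'}\subset G_{(F',\bjp)}$ the coordinates $x_{i\bjp(i)}$, $i\in F'$, are all equal on $G_R$, so $\{(i,\bjp(i)):i\in F'\}$ lies in a single block. Since $|F'|\ge 2$ (as $F'\in\cf$), that block is not a singleton, so it equals $B_{(F,\bj)}$ for some $(F,\bj)\in R$; comparing first coordinates of the inclusion $\{(i,\bjp(i)):i\in F'\}\subset B_{(F,\bj)}$ yields $F'\subset F$, and comparing second coordinates yields $\bjp(i)=\bj(i)$ for all $i\in F'$, i.e. $\bjp=(\bj)_{|F'}$, which is (b).

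The only nonroutine step is this "no extra coincidences" claim, i.e. that every equality among coordinates that holds identically on $G_R$ is already forced by the defining equations of $V_R$. It reduces to the elementary fact that a nonempty relatively open subset of a linear subspace is Zariski dense in it, so a linear functional vanishing on $G_R$ vanishes on $V_R$; the nonemptiness of $G_R=V_R\cap\intd_{\bbk}$ for acyclic $R$ is exactly what Proposition~\ref{sgam1}(ii) supplies.
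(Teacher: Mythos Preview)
Your argument is correct, and it is a genuinely different (and cleaner) route than the paper's. The paper proves (a)$\Rightarrow$(b) by an iterative modification-and-dimension-count: given $(F',\bjp)\in R'$ it forms $R_1=R\cup\{(F',\bjp)\}$, uses $\Gamma_{R_1}=\Gamma_R$ together with Proposition~\ref{sgam1}(ii) to rule out that $R_1$ is separated, extracts the set $S$ of elements of $R$ that share an index-value with $(F',\bjp)$, shows $|S|=1$ by another dimension comparison (via an auxiliary family $R_2$), and finally compares dimensions once more (via $R_3$) to force $F'\subset F_1$ and $\bjp=(\bj_1)_{|F'}$. Your approach bypasses these case distinctions by observing directly that $G_R$ is a nonempty open subset of the linear space $V_R$, so every coordinate equality holding on $G_R$ already holds on $V_R$ and hence comes from the block partition $\{B_{(F,\bj)}:(F,\bj)\in R\}$; this immediately pins down the witness $(F,\bj)$. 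Both proofs use acyclicity of $R$ only through Proposition~\ref{sgam1}(ii) (nonemptiness of $G_R$), but yours isolates this as the single nontrivial input and avoids the repeated dimension bookkeeping.
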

\noindent
{\bf Proof:} Clearly (b) implies (a). To show the other direction,
assume that $\Gamma_R \subset \Gamma_{R'}$ and let $(F',\bjp) \in R'$.
Let $R_1=R \cup \{(F',\bjp)\}$, then
$$\Gamma_R=\Gamma_R \cap \Gamma_{R'} \subset \Gamma_{R_1} \subset \Gamma_R,$$
hence $\Gamma_{R_1}=\Gamma_R \neq \{\infty\}$. It follows that if
$R_1$ is separated then it must be acyclic. But this would imply,
using Proposition \ref{sgam1}(ii), that
$$\dim \Gamma_{R_1}= \dim \Gamma_R-(|F'|-1)< \dim \Gamma_R,$$
in contradiction with $\Gamma_{R_1}=\Gamma_R$. Hence $R_1$ is not
separated and therefore
$$S= \{(F,\bj) \in R: F \cap F' \neq \emptyset ~~~ \& ~~~ (\bj)_{|F \cap F'}=(\bjp)_{|F \cap F'}\} \neq \emptyset.$$
We claim that $|S|=1$. Otherwise there exist
$(F_1,\mb{j_1})\neq (F_2,\mb{j_2}) \in R$ and $i_1 \in F_1 \cap F'$,
$i_2 \in F_2 \cap F'$ such that
$\mb{j_1}(i_1)=\mb{j'}(i_1)$ and $\mb{j_2}(i_2)=\mb{j'}(i_2)$. It follows that if
$\mb{x}=(x_{i1},\ldots, x_{ik_i})_{i=1}^n \in G_{R_1}$ then for all $i_1' \in F_1$, $i_2' \in F_2$
\begin{equation}
\label{longeq}
\begin{split}
x_{i_1' \mb{j_1}(i_1')}&=x_{i_1 \mb{j_1}(i_1)} =x_{i_1 \mb{j'}(i_1)} \\
&=x_{i_2 \mb{j'}(i_2)}=x_{i_2 \mb{j_2}(i_2)}=x_{i_2' \mb{j_2}(i_2')}.
\end{split}
\end{equation}
Since $R$ is separated, (\ref{longeq}) implies that $F_1 \cap F_2=\emptyset$.
Let $F_3=F_1 \cup F_2$ and let $\mb{j_3} \in [\bbk_{F_3}]$ be given by
$$
\mb{j_3}(i)= \left\{
\begin{array}{ll}
\mb{j_1}(i) & i \in F_1, \\
\mb{j_2}(i) & i \in F_2.
\end{array}
\right.~~
$$
Writing
$$R_2=R \setminus \{(F_1,\mb{j_1}),(F_2,\mb{j_2})\} \cup
\{(F_3,\mb{j_3})\},$$
it follows from (\ref{longeq}) that
$\Gamma_R=\Gamma_{R_1}\subset \Gamma_{R_2} \subset \Gamma_R$.
Therefore $\Gamma_{R_2}=\Gamma_R \neq \{\infty\}$. As $R_2$ is
separated, it follows that $R_2 \in A_{\cf}$ and hence by Proposition \ref{sgam1}(ii):
$$\dim \Gamma_{R_2}= \dim \Gamma_R + (|F_1|-1)+(|F_2|-1)-(|F_1 \cup F_2|-1)=\dim \Gamma_R -1,$$
in contradiction with $\Gamma_{R_2}=\Gamma_R$. Therefore $|S|=1$.

Write $S=\{(F_1,\mb{j_1})\}$ and let $i_1 \in F_1 \cap F'$. Then
$\mb{j_1}(i_1)=\mb{j'}(i_1)$. It follows that if
$\mb{x}=(x_{i1},\ldots, x_{ik_i})_{i=1}^n \in G_{R_1}$ then for all
$i_1' \in F_1$, $i' \in F'$
\begin{equation}
\label{srteq}
x_{i_1' \mb{j_1}(i_1')}=x_{i_1 \mb{j_1}(i_1)} =x_{i_1 \mb{j'}(i_1)}=x_{i' \mb{j'}(i')}.
\end{equation}
Let $F_4=F_1 \cup F'$ and let $\mb{j_4} \in [\bbk_{F_4}]$ be given by
$$
\mb{j_4}(i)= \left\{
\begin{array}{ll}
\mb{j_1}(i) & i \in F_1, \\
\mb{j'}(i) & i \in F'.
\end{array}
\right.~~
$$
Note that $\mb{j_4}$ is well defined by (\ref{srteq}).
Writing $$R_3=R \setminus\{(F_1,\mb{j_1})\} \cup \{(F_4,\mb{j_4})\},$$
it follows from (\ref{srteq}) that
$$\Gamma_R = \Gamma_{R_1} \subset \Gamma_{R_3} \subset \Gamma_R,$$
hence $\Gamma_{R_3}=\Gamma_R \neq \{\infty\}$.
Furthermore, $|S|=1$ implies that $R_3$ is separated.  Therefore by Proposition \ref{sgam1}(ii):
$$\dim \Gamma_{R_3}=\dim \Gamma_R+(|F_1|-1)-(|F_4|-1).$$
It follows that $|F_1|=|F_4|=|F_1 \cup F'|$. Hence $F' \subset F_1$ and $\mb{j'}=(\mb{j_1})_{|F'}$.
{\enp}

Let $Q$ be the intersection poset of the cover
$\{\Gamma_{(F,\bj)}:(F,\bj) \in S_{\cf}\}$ of $\ehat$ ordered by
reverse inclusion: An element $q$ of $Q$ corresponds to an
intersection $U_q$ of sets in the cover, i.e. $U_q=\Gamma_R$ for
$R \subset S_{\cf}$, and $q' \leq q$ in $Q$ iff $U_q \subset U_{q'}$.
$Q$ has a maximal element $\ohat$ that corresponds to
$U_{\ohat}=\{\infty\}$.  Fix a $\ohat \neq q \in Q$ and let
$R \subset S_{\cf}$ be a family of minimal cardinality such that
$U_q=\Gamma_R$. The assumption that $\cf$ is upward closed implies
that $R$ is a separated family. Indeed, suppose
$u'=(F',\bj') \neq u''=(F'',\bj'') \in R$ and there exists some
$i_0 \in F' \cap F''$ such that $\bj'(i_0)=\bj''(i_0)$. Let
$\infty \neq \mb{x}=(x_{i1},\ldots, x_{ik_i})_{i=1}^n \in
\Gamma_R$. Then for any $i \in F' \cap F''$
$$
x_{i \bj'(i)}=x_{i_0 \bj'(i_0)}=x_{i_0 \bj''(i_0)}=x_{i \bj''(i)},
$$
hence $\bj'(i)=\bj''(i)$. Let $u=(F,\bj) \in S_{\cf}$ where
$F=F' \cup F''$ and
$$\bj(i)
= \left\{
\begin{array}{ll}
\bj'(i) & i \in F', \\
\bj''(i) & i \in F''.
\end{array}
\right.~~
$$
Then $\Gamma_R=\Gamma_{R-\{u',u''\} \cup \{u\}}$, contradicting the
minimality of $R$. Thus $R$ is separated. By Proposition
\ref{sgam1}(i), the assumption $q \neq \ohat$ implies that
$R \in A_{\cf}$; ~cf.~Definition \ref{d:kr}.\\
We next study the topology of the order complex $\Delta(Q_{<q})$.
\begin{proposition}
\label{p:dqq}
Fix $\ohat \neq q \in Q$ and write $U_q=\Gamma_R$ where $R=\{(F_{\ell},\bj_{\ell})\}_{\ell=1}^r \in A_{\cf}$. Then
there is a homeomorphism
\begin{equation}
\label{iso1}
\Delta(Q_{<q}) \cong \Delta(M(\cf)_{\prec F_1})* \cdots * \Delta(M(\cf)_{\prec F_r}) * S^{r-2}.
\end{equation}
\end{proposition}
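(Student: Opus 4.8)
\noindent{\bf Proof of Proposition \ref{p:dqq} (plan):} The plan is to identify $Q_{<q}$, \emph{as a poset}, with the proper part of a direct product of $r$ bounded posets, and then to deduce the statement from a general lemma about order complexes of such proper parts.

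For the identification I would argue as follows. If $q'<q$ in $Q$, write $U_{q'}=\Gamma_{R'}$ with $R'$ of minimal cardinality; then $R'$ is separated as in the discussion preceding the proposition, and since $\Gamma_{R'}\supsetneq\Gamma_R\ne\{\infty\}$ we have $R'\in A_{\cf}$. Applying Proposition \ref{p:gr} to $\Gamma_R\subset\Gamma_{R'}$, every element of $R'$ has the form $(G,(\bj_\ell)_{|G})$ with $G\subset F_\ell$, and the index $\ell$ is unique because $R$ is separated and $|G|\ge 2$. Thus $R'=\bigsqcup_{\ell=1}^{r}R'_\ell$, and separatedness of the block $R'_\ell$ says precisely that $\cg_\ell:=\{G:(G,(\bj_\ell)_{|G})\in R'_\ell\}$ is a matching contained in $F_\ell$; equivalently $\cg_\ell\in\widehat M_\ell:=M(\cf)_{\preceq F_\ell}\cup\{\hat 0_\ell\}$, the poset $M(\cf)_{\preceq F_\ell}$ (with top $\hat 1_\ell=\{F_\ell\}$) together with a new minimum $\hat 0_\ell$ standing for the empty matching. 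Conversely, any tuple $(\cg_\ell)_\ell\in\prod_\ell\widehat M_\ell$ gives a separated family $R'=\bigsqcup_\ell\{(G,(\bj_\ell)_{|G}):G\in\cg_\ell\}$, and this $R'$ is acyclic: fixing $\mb{x}\in V_R\cap\intd_{\bbk}$ (available since $R\in A_{\cf}$, cf.\ the proof of Proposition \ref{sgam1}(ii)), the value $c_\ell:=x_{i\bj_\ell(i)}$ is independent of $i\in F_\ell$, an edge of $K_{R'}$ from the $\ell$-block to the $\ell'$-block forces $c_\ell<c_{\ell'}$, and a directed cycle would give $c_{\ell_1}<\cdots<c_{\ell_1}$. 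By Proposition \ref{p:gr} once more, distinct tuples give distinct $\Gamma_{R'}$; $\Gamma_{R''}\subset\Gamma_{R'}$ holds iff $\cg'_\ell\preceq\cg''_\ell$ for all $\ell$; $\Gamma_{R'}\supsetneq\Gamma_R$ means not every $\cg_\ell$ equals $\hat 1_\ell$; and $R'\ne\varnothing$ (so that $U_{q'}$ is a genuine intersection, i.e.\ $q'\in Q$) means not every $\cg_\ell$ equals $\hat 0_\ell$. Setting $\bar M_\ell:=M(\cf)_{\prec F_\ell}=\widehat M_\ell\setminus\{\hat 0_\ell,\hat 1_\ell\}$, this produces a poset isomorphism $Q_{<q}\cong\bigl(\prod_{\ell=1}^{r}\widehat M_\ell\bigr)\setminus\{\hat 0,\hat 1\}$, with $\hat 0=(\hat 0_\ell)_\ell$, $\hat 1=(\hat 1_\ell)_\ell$.

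It then remains to prove the poset lemma: for finite posets $P_1,\dots,P_r$ with minima $\hat 0_\ell$ and maxima $\hat 1_\ell$ and $\bar P_\ell:=P_\ell\setminus\{\hat 0_\ell,\hat 1_\ell\}$,
\[\Delta\Bigl(\textstyle\prod_\ell P_\ell\setminus\{\hat 0,\hat 1\}\Bigr)\;\cong\;\Delta(\bar P_1)*\cdots*\Delta(\bar P_r)*S^{r-2},\qquad S^{-1}:=\varnothing,\]
applied with $P_\ell=\widehat M_\ell$. I would prove this by induction on $r$ ($r=1$ trivial); the inductive step reduces to $r=2$ by taking $P:=\prod_{\ell<r}P_\ell$, $Q_r:=P_r$ and combining $\Delta(\bar P)\simeq\Delta(\bar P_1)*\cdots*\Delta(\bar P_{r-1})*S^{r-3}$ with $S^a*S^b=S^{a+b+1}$. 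For $r=2$, let $X=P\times Q_r$ with its top and bottom deleted, and cover $X$ by the subposets $L=\{(p,s)\in X:s\ne\hat 1_{Q_r}\}$ and $R=\{(p,s)\in X:p\ne\hat 1_P\}$. One checks that $\Delta(X)=\Delta(L)\cup\Delta(R)$ and $\Delta(L)\cap\Delta(R)=\Delta(L\cap R)$, using that any element of $L\setminus R$ is incomparable with any element of $R\setminus L$. Using $\Delta(A\times B)\cong\Delta(A)\times\Delta(B)$ and that an order complex with a maximum or a minimum is contractible, a further two-piece Mayer--Vietoris decomposition of $L$ (into $(P\setminus\hat 0_P)\times(Q_r\setminus\hat 1_{Q_r})$ and $P\times\bar Q_r$) shows $\Delta(L)$ — and symmetrically $\Delta(R)$ — is contractible; the analogous decomposition of $L\cap R=\bigl((P\setminus\hat 1_P)\times(Q_r\setminus\hat 1_{Q_r})\bigr)\setminus\{\hat 0\}$ (into $\bar P\times(Q_r\setminus\hat 1_{Q_r})$ and $(P\setminus\hat 1_P)\times\bar Q_r$, meeting in $\bar P\times\bar Q_r$) exhibits $\Delta(L\cap R)$ as the double mapping cylinder of the two projections off $\Delta(\bar P)\times\Delta(\bar Q_r)$, i.e.\ $\Delta(L\cap R)\simeq\Delta(\bar P)*\Delta(\bar Q_r)$. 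Gluing then presents $\Delta(X)$ as the homotopy pushout $*\leftarrow\Delta(\bar P)*\Delta(\bar Q_r)\rightarrow*$, which is $\Sigma(\Delta(\bar P)*\Delta(\bar Q_r))=\Delta(\bar P)*\Delta(\bar Q_r)*S^0$.

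The bookkeeping in the first step is routine given Propositions \ref{p:gr} and \ref{sgam1}; the crux is the poset lemma, and inside it the checks that the chosen covers are ``separating on chains'' and that the relevant inclusions of order complexes are homotopy equivalences. The argument above yields a homotopy equivalence in Proposition \ref{p:dqq}; promoting it to a homeomorphism means carrying the identifications $\Delta(A\times B)\cong\Delta(A)\times\Delta(B)$, the join-as-double-mapping-cylinder description, and the cone retractions through explicit (PL) maps, which is possible but is where the extra care is required. {\enp}
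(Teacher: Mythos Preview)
Your overall strategy coincides with the paper's: first exhibit a poset isomorphism between $Q_{<q}$ and the proper part of $\prod_{\ell=1}^r \widehat M_\ell$, then invoke a general statement about order complexes of proper parts of products. Your poset identification is essentially the paper's Lemma \ref{isop}, argued the same way via Proposition \ref{p:gr}; your extra acyclicity check (using a witness $\mb{x}\in V_R\cap\intd_{\bbk}$ and the constants $c_\ell$) is a nice explicit touch that the paper leaves implicit inside the words ``$\gamma:C_q\to A_{\cf}$''.

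The difference is in the second step. The ``poset lemma'' you set out to prove by hand is exactly Walker's theorem (Theorem 5.1(d) in \cite{walker88}, quoted in the paper as Theorem \ref{t:walker}): for bounded finite posets $T_1,\ldots,T_r$ one has a \emph{homeomorphism}
\[
\Delta\bigl((T_1\times\cdots\times T_r)\setminus\{\hat 0,\hat 1\}\bigr)\;\cong\;\Delta(\widehat{T_1})*\cdots*\Delta(\widehat{T_r})*S^{r-2}.
\]
The paper simply cites this and is done. Your Mayer--Vietoris/homotopy-pushout argument is a correct homotopy-level reproof of Walker's result (the covers you write down really do split the chains as claimed, and the double-mapping-cylinder identification with the join is standard), but, as you yourself note, it yields only a homotopy equivalence; promoting it to a PL homeomorphism is exactly what Walker's paper does. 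So your plan proves the proposition up to homotopy, which suffices for the downstream use in Theorem \ref{homgamma1} and Theorem \ref{poin1}, while the citation to \cite{walker88} gives the homeomorphism as stated with no extra work.
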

\noindent {\bf Proof:} Let $M(\cf)_{\preceq F_{\ell}}^*$ denote
the poset obtained by appending to $M(\cf)_{\preceq F_{\ell}}$ a minimal
element $0_{\ell}$. Denote
$$C_q= M(\cf)_{\preceq F_1}^* \times \cdots \times M(\cf)_{\preceq F_r}^* \setminus \{(0_1,\ldots,0_r)\}.$$
Define a mapping
$\gamma: C_q \rightarrow A_{\cf}$
as follows. Let
$\alpha=(\alpha_1,\ldots,\alpha_r) \in C_q$ and
let $$L(\alpha)=\{1 \leq \ell \leq r: \alpha_{\ell} \neq 0_{\ell}\}.$$
Note that $L(\alpha) \neq \emptyset$. For $\ell \in L(\alpha)$ write
$\alpha_{\ell}= \cg_{\ell} \in M(\cf)$ and let
$$\gamma(\alpha)=\bigcup_{\ell \in L(\alpha)} \{(F,(\bj_{\ell})_{|F}): F \in \cg_{\ell}\}.$$
Define an order preserving map $\theta: C_q \rightarrow Q_{\leq q}$ as
follows: For $\alpha \in C_q$ let $\theta(\alpha)$ be the element of
$Q$ that satisfies $U_{\theta(\alpha)}=\Gamma_{\gamma(\alpha)}$.
\begin{lemma}
\label{isop}
$\theta$ is a poset isomorphism.
\end{lemma}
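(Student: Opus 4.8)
The plan is to verify that $\theta$ takes values in $Q_{\le q}$, that it both preserves and reflects the partial order (which already forces injectivity, since $C_q$ is a poset), and that it is onto $Q_{\le q}$; throughout, the bridge between the geometry of the sets $\Gamma_{(\cdot)}$ and the combinatorics of $C_q$ is Proposition \ref{p:gr}. For well-definedness I would first observe that $\gamma(\alpha)$ is separated for every $\alpha\in C_q$: two distinct generators $(F,(\bj_{\ell})_{|F})$ and $(F',(\bj_{\ell'})_{|F'})$ with $\ell=\ell'$ satisfy $F\cap F'=\emptyset$ because $\cg_{\ell}$ is a matching, while if $\ell\neq\ell'$ then every $i\in F\cap F'$ lies in $F_{\ell}\cap F_{\ell'}$, so separatedness of $R$ gives $(\bj_{\ell})_{|F}(i)=\bj_{\ell}(i)\neq\bj_{\ell'}(i)=(\bj_{\ell'})_{|F'}(i)$. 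Next, choosing $\mb{x}\in G_R$ (nonempty since $R\in A_{\cf}$, cf.\ the proof of Proposition \ref{sgam1}(ii)), each generator $(F,(\bj_{\ell})_{|F})$ of $\gamma(\alpha)$ has $F\subseteq F_{\ell}$, so the equations cutting out $G_{(F_{\ell},\bj_{\ell})}$ (satisfied by $\mb{x}$) restrict to those cutting out $G_{(F,(\bj_{\ell})_{|F})}$; hence $\emptyset\neq G_R\subseteq G_{\gamma(\alpha)}$ and $\Gamma_R\subseteq\Gamma_{\gamma(\alpha)}\neq\{\infty\}$. By the contrapositive of Proposition \ref{sgam1}(i) the separated family $\gamma(\alpha)$ is acyclic, so $\theta(\alpha)$ is a genuine element of $Q$, and $\Gamma_R\subseteq\Gamma_{\gamma(\alpha)}$ is exactly the statement $\theta(\alpha)\le q$. (Applied to $\alpha=(\{F_1\},\dots,\{F_r\})$, the top of $C_q$, this gives $\gamma(\alpha)=R$, so $q$ lies in the image.)

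The one combinatorial fact I would isolate, an immediate consequence of separatedness of $R$, is: if $\emptyset\neq F'\subseteq F_{\ell}\cap F_{\ell'}$ and $(\bj_{\ell})_{|F'}=(\bj_{\ell'})_{|F'}$, then $\ell=\ell'$; since all sets in $\cf$ have size $\ge 2$, this applies to every $F'$ occurring below. For order-preservation, suppose $\alpha\le\beta$ in $C_q$, so $L(\alpha)\subseteq L(\beta)$ and $\alpha_{\ell}\preceq\beta_{\ell}$ for $\ell\in L(\alpha)$; given a generator $(F',(\bj_{\ell})_{|F'})$ of $\gamma(\alpha)$, pick $G'\in\beta_{\ell}$ with $F'\subseteq G'$, so $(G',(\bj_{\ell})_{|G'})\in\gamma(\beta)$ and $(\bj_{\ell})_{|F'}=((\bj_{\ell})_{|G'})_{|F'}$ by transitivity of restriction. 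Thus condition (b) of Proposition \ref{p:gr} holds with $R=\gamma(\beta)$ and $R'=\gamma(\alpha)$, giving $\Gamma_{\gamma(\beta)}\subseteq\Gamma_{\gamma(\alpha)}$, i.e.\ $\theta(\alpha)\le\theta(\beta)$. For order-reflection, suppose $\theta(\alpha)\le\theta(\beta)$, i.e.\ $\Gamma_{\gamma(\beta)}\subseteq\Gamma_{\gamma(\alpha)}$; Proposition \ref{p:gr} produces, for each generator $(F',(\bj_{\ell})_{|F'})$ of $\gamma(\alpha)$, a generator $(G,(\bj_{\ell'})_{|G})$ of $\gamma(\beta)$ with $F'\subseteq G\subseteq F_{\ell'}$ and $(\bj_{\ell})_{|F'}=(\bj_{\ell'})_{|F'}$. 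The combinatorial fact forces $\ell=\ell'$, so $G\in\beta_{\ell}$ witnesses $F'$; hence $\alpha_{\ell}\preceq\beta_{\ell}$ for all $\ell$ and $L(\alpha)\subseteq L(\beta)$, i.e.\ $\alpha\le\beta$. In particular $\theta$ is injective.

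The step I expect to be the main obstacle is surjectivity onto $Q_{\le q}$, which asks us to rebuild an element of $C_q$ from an arbitrary $q'\le q$. I would write $U_{q'}=\Gamma_{R'}$ with $R'$ of minimal cardinality; the argument in the paragraph preceding Proposition \ref{p:dqq} shows $R'$ is separated, and from $\Gamma_{R'}=U_{q'}\supseteq U_q=\Gamma_R\supsetneq\{\infty\}$ together with Proposition \ref{sgam1}(i) we get $R'\in A_{\cf}$. Applying Proposition \ref{p:gr} to $\Gamma_R\subseteq\Gamma_{R'}$, every $(F',\bjp)\in R'$ equals $(F',(\bj_{\ell})_{|F'})$ for some index $\ell$ with $F'\subseteq F_{\ell}$, and this $\ell$ is unique by the combinatorial fact. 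Setting $\cg_{\ell}=\{F':(F',(\bj_{\ell})_{|F'})\in R'\}$, separatedness of $R'$ forces distinct members of $\cg_{\ell}$ to be disjoint, so $\cg_{\ell}$ is a matching lying in $M(\cf)_{\preceq F_{\ell}}$; define $\alpha_{\ell}=\cg_{\ell}$ when $\cg_{\ell}\neq\emptyset$ and $\alpha_{\ell}=0_{\ell}$ otherwise. Since $R'\neq\emptyset$, $\alpha=(\alpha_1,\dots,\alpha_r)\in C_q$, and unwinding the definition of $\gamma$ gives $\gamma(\alpha)=R'$, hence $\theta(\alpha)=q'$. Together with the previous paragraph, $\theta$ is an order-preserving, order-reflecting bijection $C_q\to Q_{\le q}$, i.e.\ a poset isomorphism.
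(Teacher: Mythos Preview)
Your proof is correct and follows essentially the same route as the paper: the key tool in both is Proposition \ref{p:gr}, which translates the geometric containment $\Gamma_R\subseteq\Gamma_{R'}$ into the combinatorial restriction condition, and both arguments recover the preimage $\alpha\in C_q$ of a given $q'\le q$ by reading off the index $\ell$ for each element of an acyclic $R'$ with $U_{q'}=\Gamma_{R'}$.

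Your write-up is considerably more thorough than the paper's. The paper's proof only establishes bijectivity: surjectivity via Proposition \ref{p:gr} exactly as you do, and injectivity by arguing $\Gamma_{\gamma(\alpha)}=\Gamma_{\gamma(\alpha')}\Rightarrow\gamma(\alpha)=\gamma(\alpha')$ (again Proposition \ref{p:gr}) together with the assertion that $\gamma$ is ``clearly injective''. Order-preservation is simply asserted when $\theta$ is introduced, and order-reflection is not addressed at all. You supply all of these: the verification that $\gamma(\alpha)$ is separated and acyclic (hence $\theta$ is well-defined), explicit order-preservation and order-reflection arguments, and the uniqueness-of-$\ell$ observation (your ``combinatorial fact'') that the paper uses implicitly. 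Your derivation of injectivity from order-reflection is a clean alternative to the paper's direct argument via injectivity of $\gamma$.
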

\noindent {\bf Proof:} To show surjectivity, let $q' \leq q$ with
$U_{q'}=\Gamma_{R'}$ for some $R' \in A_{\cf}$.  Then
$\Gamma_R=U_q \subset U_{q'}= \Gamma_{R'}$ and hence, by Proposition
\ref{p:gr}, there exists an $\alpha \in C_q$ such that
$\gamma(\alpha)=R'$. Therefore
$U_{\theta(\alpha)}=\Gamma_{\gamma(\alpha)}=\Gamma_{R'}=U_{q'}$ and so
$\theta(\alpha)=q'$. To show injectivity, assume that
$\theta(\alpha)=\theta(\alpha')$ for some $\alpha,\alpha' \in
C_q$.
Then
$\Gamma_{\gamma(\alpha)}=U_{\theta(\alpha)}=U_{\theta(\alpha')}=\Gamma_{\gamma(\alpha')}$
and therefore $\gamma(\alpha)=\gamma(\alpha')$ by Proposition
\ref{p:gr}. As $\gamma$ is clearly injective, it follows that
$\alpha=\alpha'$.  {\enp}
\noindent
Recall the following result of Walker (Theorem 5.1(d) in \cite{walker88}).
\begin{theorem}[Walker \cite{walker88}]
\label{t:walker}
For $1 \leq i \leq r$ let $T_i$ be a finite poset with minimal element
$0_i$ and maximal element $1_i$.  Let $T=T_1\times \cdots \times T_r$
and $\bzero=(0_1,\ldots,0_r)$, $\bone=(1_1,\ldots,1_r) \in T$.  Let
$\widehat{T_i}=T_i-\{(0_i,1_i)\}$ and
$\widehat{T}=T-\{\bzero,\bone\}$.  Then there is a homeomorphism
$$\Delta(\widehat{T}) \cong \Delta(\widehat{T_1}) * \cdots * \Delta(\widehat{T_r})*S^{r-2}.$$
\end{theorem}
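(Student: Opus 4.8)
This is Theorem~5.1(d) of \cite{walker88}; the strategy one would follow to reprove it is as follows. The bulk of the work is the case $r=2$, so the first step is to reduce to it. If each $T_i$ is bounded then so is every partial product $T_1\times\cdots\times T_j$, with minimal element $(0_1,\dots,0_j)$ and maximal element $(1_1,\dots,1_j)$; hence, putting $P=T_1\times\cdots\times T_{r-1}$ and $Q=T_r$, one has $T_1\times\cdots\times T_r=P\times Q$, $\widehat{T}=\widehat{P\times Q}$, and $\widehat{P}=\widehat{T_1\times\cdots\times T_{r-1}}$. Granting the case $r=2$ and inducting on $r$, $\Delta(\widehat{T})\cong\Delta(\widehat{P})\ast\Delta(\widehat{T_r})\ast S^0\cong\bigl(\Delta(\widehat{T_1})\ast\cdots\ast\Delta(\widehat{T_{r-1}})\ast S^{r-3}\bigr)\ast\Delta(\widehat{T_r})\ast S^0$, and since $S^a\ast S^b\cong S^{a+b+1}$ and a homeomorphism of one factor of a join induces a homeomorphism of the join, this equals $\Delta(\widehat{T_1})\ast\cdots\ast\Delta(\widehat{T_r})\ast S^{r-2}$; the base case $r=1$ is the tautology $\Delta(\widehat{T_1})=\Delta(\widehat{T_1})\ast\emptyset$ with $\emptyset=S^{-1}$.

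It remains to prove $\Delta(\widehat{P\times Q})\cong\Delta(\widehat{P})\ast\Delta(\widehat{Q})\ast S^0$ for bounded posets $P,Q$. Since $\Delta(\widehat{P})\ast\Delta(\widehat{Q})=\Delta(\widehat{P}\oplus\widehat{Q})$ (the order complex of an ordinal sum) and joining with $S^0$ is unreduced suspension, the claim is that $\Delta(\widehat{P\times Q})$ is the suspension of $\Delta(\widehat{P}\oplus\widehat{Q})$. I would approach this by constructing an order-preserving surjection $\phi\colon\widehat{P\times Q}\to J$ onto a \emph{model} poset $J$ with $\Delta(J)=\Delta(\widehat{P})\ast\Delta(\widehat{Q})\ast S^0$, designed so that every lower fibre $\phi^{-1}(J_{\le x})$ has contractible order complex, and then applying Quillen's fibre lemma to get a homotopy equivalence $\Delta(\widehat{P\times Q})\simeq\Delta(J)$. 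The design of $\phi$ is the delicate point: one wants each $\phi^{-1}(J_{\le x})$ to be a down-set of $\widehat{P\times Q}$ on which some order-preserving self-map comparable to the identity has image possessing a greatest or a least element, so that its order complex is contractible by the usual deformation-retraction argument (of the kind already used in Section~\ref{s:arrangements}); this is exactly where the boundedness of $P$ and $Q$ is used, the bounds $0_P,1_P,0_Q,1_Q$ serving as the targets to collapse toward. To upgrade the resulting homotopy equivalence to the homeomorphism asserted in the theorem one follows \cite{walker88}, realising $\phi$ after barycentric subdivision and using that the fibres involved are order cones, hence balls that fit together along a common subdivision.

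The main obstacle is the case $r=2$, and within it the simultaneous contractibility of all the lower fibres of $\phi$ (together with the subdivision bookkeeping needed for the homeomorphism statement). It is worth noting why a cruder approach fails: the two-piece Mayer--Vietoris argument obtained by covering $\widehat{P\times Q}$ with the subcomplexes of chains avoiding the bottom, respectively the top, of $P\times Q$ does \emph{not} compute $\Delta(\widehat{P\times Q})$, because the \emph{vertical} chains $(p,0_Q)<\cdots<(p,1_Q)$ with $p\in\widehat{P}$ lie in neither subcomplex --- which is precisely what forces the fibrewise argument. Finally, for the use of Theorem~\ref{t:walker} in this paper (Proposition~\ref{p:dqq} and the subsequent application of the Wedge Lemma) only the homotopy type of $\Delta(\widehat{T})$ matters, so the homotopy-equivalence form already suffices and the homeomorphism refinement of \cite{walker88} may be used as a black box.
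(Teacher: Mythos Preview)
The paper does not prove Theorem~\ref{t:walker}; it is quoted verbatim as Theorem~5.1(d) of \cite{walker88} and used as a black box in the proof of Proposition~\ref{p:dqq}. So there is no ``paper's own proof'' to compare against, and your final remark is exactly right: for the application here (feeding $\Delta(Q_{<q})$ into the Wedge Lemma) only the homotopy type of $\Delta(\widehat{T})$ is needed, so the homotopy-equivalence version already suffices.

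As for your sketch itself: the inductive reduction to $r=2$ is clean and correct. For the $r=2$ step, the Quillen-fibre strategy you outline is viable, but as written it is not a proof --- you have not actually specified the map $\phi$ or the model poset $J$, only listed the properties you would like them to have. One concrete choice that works: take $J$ to be the ordinal sum $\{a\}\oplus\widehat{P}\oplus\widehat{Q}\oplus\{b\}$ (so $\Delta(J)=\mathrm{susp}\bigl(\Delta(\widehat{P})\ast\Delta(\widehat{Q})\bigr)$), and send $(p,q)\in\widehat{P\times Q}$ to $a$ if $q=0_Q$, to $b$ if $p=1_P$, to $p$ if $q=1_Q$ and $p\in\widehat{P}$, and to $q$ otherwise; the lower fibres are then cones (they contain $(0_P,0_Q)$ or have a maximum), and Quillen's Theorem~A applies. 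Your remark about why a naive Mayer--Vietoris cover fails is a bit garbled (the ``vertical chains'' you describe do lie in $\widehat{P\times Q}$ and in at least one of the two pieces), but it is peripheral. The honest gap is that upgrading from homotopy equivalence to the asserted \emph{homeomorphism} genuinely requires Walker's subdivision machinery, and ``follow \cite{walker88}'' is not an argument; since the paper itself is content to cite, this is acceptable here.
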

\ \\ \\
For $1 \leq i \leq r$ let $T_i=M(\cf)_{\preceq F_i}^*$. Then
$\widehat{T_i}=M(\cf)_{\prec F_i}$ and
$\widehat{T}=C_q-\{(\{F_1\},\ldots,\{F_r\})\}$. Lemma \ref{isop} thus
implies that $\widehat{T} \cong Q_{<q}$.  Therefore by Theorem
\ref{t:walker}:
\begin{equation*}
\begin{split}
  \Delta(Q_{<q}) &\cong \Delta(\widehat{T}) \cong \Delta(\widehat{T_1}) * \cdots * \Delta(\widehat{T_r})*S^{r-2} \\
  &=\Delta(M(\cf)_{\prec F_1})* \cdots * \Delta(M(\cf)_{\prec F_r}) *
  S^{r-2}.
\end{split}
\end{equation*}
{\enp}

\begin{definition}
\label{d:afm}
For a function $0 \neq \bbm \in \Nat^{\cf}$, let
$A_{\cf}(\bbm)$ denote the set of all $R \in A_{\cf}$ such that
$|\{\bj\in [\mb{k}_F]|\; (F,\bj)\in R\}|=\bbm(F)$ for all $F \in
\cf$.
\end{definition}
\noindent
The final ingredient needed for the proof of Theorem \ref{homgamma1}
is the following computation (see (\ref{d:bfk}) and Definition
\ref{d:comb}(i)).
\begin{proposition}
\label{affm}
Let $0 \neq \bbm \in \Nat^{\cf}$. Then:
\begin{equation}
\label{afmeq}
|A_{\cf}(\bbm)|=
\bfk(\bbm)=\frac{a(T{_{\cf}}(\bbm))}{\prod_{F \in \cf} \bbm(F)!} \prod_{j=1}^n \binom{k_j}{\sum_{F \ni j} \bbm(F)}.
\end{equation}
\end{proposition}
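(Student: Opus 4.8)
The plan is to transform the count $|A_{\cf}(\bbm)|$ into a count of acyclic orientations of $T_{\cf}(\bbm)$ via two bookkeeping reductions and one structural bijection. Write $V=\bigcup_{F\in\cf}\{F\}\times[\bbm(F)]$ for the vertex set of $T_{\cf}(\bbm)$, and for $v=(F,\ell)\in V$ put $F_v=F$. First I would pass to \emph{labelled} families: let $\widetilde{A}_{\cf}(\bbm)$ be the set of functions $\bj\colon v\mapsto\bj_v\in[\bbk_{F_v}]$ on $V$ such that $R_{\bj}:=\{(F_v,\bj_v):v\in V\}$ lies in $A_{\cf}$. Since an element of $A_{\cf}$ is separated, the $\bbm(F)$ partial sequences over a fixed $F$ are pairwise distinct, so the forgetful map $\widetilde{A}_{\cf}(\bbm)\to A_{\cf}(\bbm)$ is exactly $\bigl(\prod_{F\in\cf}\bbm(F)!\bigr)$-to-one; hence it suffices to prove $|\widetilde{A}_{\cf}(\bbm)|=a(T_{\cf}(\bbm))\prod_{j=1}^n\binom{k_j}{\sum_{F\ni j}\bbm(F)}$.

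Second, I would read off a labelled family coordinate by coordinate. For $j\in[n]$ set $C_j=\{v\in V:j\in F_v\}$; this is a clique of $T_{\cf}(\bbm)$ of size $\sum_{F\ni j}\bbm(F)$, and every edge of $T_{\cf}(\bbm)$ lies inside some $C_j$. A labelled family is the same datum as an injection $\sigma_j\colon C_j\hookrightarrow[k_j]$ for every $j$ (take $\sigma_j(v)=\bj_v(j)$): injectivity of $\sigma_j$ is precisely separatedness of $R_{\bj}$ in coordinate $j$, and the defining equations of the $G_{(F,\bj_v)}$ only ever relate coordinates lying in a common $F$. Each $\sigma_j$ factors uniquely as the choice of its image $S_j\in\binom{[k_j]}{|C_j|}$ together with the linear order $\prec_j$ that $S_j\subseteq[k_j]$ induces on $C_j$, and whether $R_{\bj}\in A_{\cf}$ depends only on the tuple $(\prec_j)_{j\in[n]}$ and not on the subsets $S_j$ (the graph $K_{R_{\bj}}$, transported along the canonical bijection $R_{\bj}\leftrightarrow V$, depends only on the comparisons $u\prec_j v$). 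Consequently $|\widetilde{A}_{\cf}(\bbm)|=\bigl(\prod_j\binom{k_j}{|C_j|}\bigr)\cdot N$, where $N$ counts the tuples $(\prec_j)_j$ of linear orders on the $C_j$ whose associated family is acyclic, and everything reduces to the identity $N=a(T_{\cf}(\bbm))$.

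The identity $N=a(T_{\cf}(\bbm))$ is the heart of the proof. The crucial point is that an acyclic family cannot \emph{split} an edge of $T_{\cf}(\bbm)$: if $u,v\in C_j\cap C_{j'}$ with $u\prec_j v$ but $v\prec_{j'}u$, then the partial sequences over $u$ and $v$ force $x_{j\,\bj_u(j)}<x_{j\,\bj_v(j)}=x_{j'\,\bj_v(j')}<x_{j'\,\bj_u(j')}=x_{j\,\bj_u(j)}$, so $G_{R_{\bj}}=\emptyset$ and $R_{\bj}\notin A_{\cf}$ by Proposition~\ref{sgam1}. Thus for an admissible tuple all the $\prec_j$ agree on each edge of $T_{\cf}(\bbm)$, so they glue to a single orientation $\alpha=\alpha(\prec)$ of $T_{\cf}(\bbm)$ with $\alpha|_{C_j}=\prec_j$; then $K_{R_{\bj}}$ coincides with $\alpha$, so the family is acyclic exactly when $\alpha$ is an acyclic orientation. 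Conversely, an acyclic orientation $\alpha$ of $T_{\cf}(\bbm)$ restricts on each clique $C_j$ to an acyclic tournament, i.e.\ to a linear order $\alpha|_{C_j}$, and the tuple $(\alpha|_{C_j})_j$ is non-splitting with associated orientation $\alpha$. These two passages are mutually inverse, which gives $N=a(T_{\cf}(\bbm))$; combining with the two reductions,
\[
|A_{\cf}(\bbm)|=\frac{|\widetilde{A}_{\cf}(\bbm)|}{\prod_{F\in\cf}\bbm(F)!}=\frac{a(T_{\cf}(\bbm))}{\prod_{F\in\cf}\bbm(F)!}\prod_{j=1}^n\binom{k_j}{\sum_{F\ni j}\bbm(F)}=\bfk(\bbm).
\]

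I expect the main obstacle to be exactly this last bijection, i.e.\ recognising that acyclicity of a partial-sequence family is equivalent to the conjunction ``no split edge'' together with ``the induced orientation of $T_{\cf}(\bbm)$ is acyclic'', and then checking that clique-restriction and gluing are well defined and mutually inverse. By contrast the two preliminary steps — the $\prod_F\bbm(F)!$-to-one labelling and the factorisation of each coordinate injection into an image and a relative order — are routine once the dictionary between labelled families and tuples of injections has been set up.
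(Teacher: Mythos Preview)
Your proof is correct and follows essentially the same bijection as the paper: the paper sends $R\in A_{\cf}(\bbm)$ directly to a pair $(\alpha,(B_i))\in\tilde{\ca}(T_{\cf}(\bbm))\times\prod_i\binom{[k_i]}{|C_i|}$, where $\tilde{\ca}$ consists of acyclic orientations normalized by $(F,i)\to(F,i')$ for $i<i'$, which is precisely the dual bookkeeping to your passage through labelled families $\widetilde{A}_{\cf}(\bbm)$. Your explicit treatment of the ``no split edge'' condition via $G_{R_{\bj}}=\emptyset$ and Proposition~\ref{sgam1} makes rigorous a point the paper leaves inside the phrase ``it is straightforward to check that $\tau$ is bijective''.
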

\noindent {\bf Proof:} Let $\tilde{\ca}(T{_{\cf}}(\bbm))$ denote the
set of all acyclic orientations of $T{_{\cf}}(\bbm)$ such that
$(F,i) \rightarrow (F,i')$ for all $F \in \cf$ and
$1 \leq i < i' \leq \bbm(F)$. Then
$$
|\tilde{\ca}(T{_{\cf}}(\bbm))|=\frac{a(T{_{\cf}}(\bbm))}{\prod_{F \in \cf} \bbm(F)!}.
$$
\noindent
Define a mapping
$$\tau: A_{\cf}(\bbm) \rightarrow \tilde{\ca}(T{_{\cf}}(\bbm)) \times \prod_{i=1}^n \binom{[k_i]}{\sum_{F \ni i} \bbm(F)}$$
as follows. Let $R \in A_{\cf}(\bbm)$. For $1 \leq i \leq n$ let
$$B_i=\{\bj(i): (F,\bj) \in R \text{~and~} i \in F\} \in  \binom{[k_i]}{\sum_{F \ni i} \bbm(F)}.$$
Write
$$R=\bigcup_{\{F \in \cf:\bbm(F)>0\}} \{(F,\bj_{F,\ell}): 1\leq \ell \leq \bbm(F)\}$$
where $\bj_{F,\ell} \in \bbk_{F}$ for all $1 \leq \ell \leq \bbm(F)$
and
$$
\bj_{F,1}(i)< \cdots < \bj_{F,\bbm(F)}(i)
$$
for all $i \in F$.
Define an orientation $\alpha \in \tilde{\ca}(T{_{\cf}}(\bbm))$ as follows.
Let $e=\{(F,s),(F',s')\}$ be an edge of $T{_{\cf}}(\bbm)$.
Define $\alpha(e)=\left((F,s),(F',s')\right)$ if either
$F=F'$ and $s<s'$, or if $F\neq F'$ and $\bj_{F,s}(i) < \bj_{F',s'}(i)$
for some (and therefore all) $i \in F \cap F'$.
Now let
$$
\tau(R)=(\alpha,B_1,\ldots,B_n).$$
It is straightforward to check that $\tau$ is bijective. This proves
Proposition \ref{affm}.
\enp

\begin{example}
\label{biject}
To illustrate the bijection $\tau$ from the proof of Claim \ref{affm}
consider the family $\cf=\{F \subset [4]: |F| \geq 2\}$ and let
$n=4$, $(k_1,k_2,k_3,k_4)=(4,5,4,2)$.  Let $F_1=\{1,2\}$,
$F_2=\{2,3\}$ and $F_3=\{1,3,4\}$ and for $F \in \cf$ let
$$
\bbm(F)= \left\{
\begin{array}{ll}
2 & F=F_1~~\emph{or}~~F=F_2, \\
1 & F=F_3, \\
0 & \emph{otherwise.}
\end{array}
\right.~~
$$
Let $R \in A_{\cf}(\bbm)$ satisfy
$\tau(R)=(\alpha,B_1,B_2,B_3,B_4)$ where
$$(B_1,B_2,B_3,B_4)=(\{2,3,4\},\{1,2,4,5\},\{1,3,4\},\{2\})$$
and the orientation $\alpha$ on the (complete) graph $T_{\cf}(\bbm)$
is given by the total order
$$(F_2,1) \rightarrow (F_1,1) \rightarrow (F_1,2) \rightarrow (F_3,1) \rightarrow (F_2,2).$$
The reconstruction of $R$ from $\tau(R)$ is depicted in Figure
\ref{figure5}.
\end{example}

\begin{figure}
\begin{center}
   \scalebox{0.6}{\input{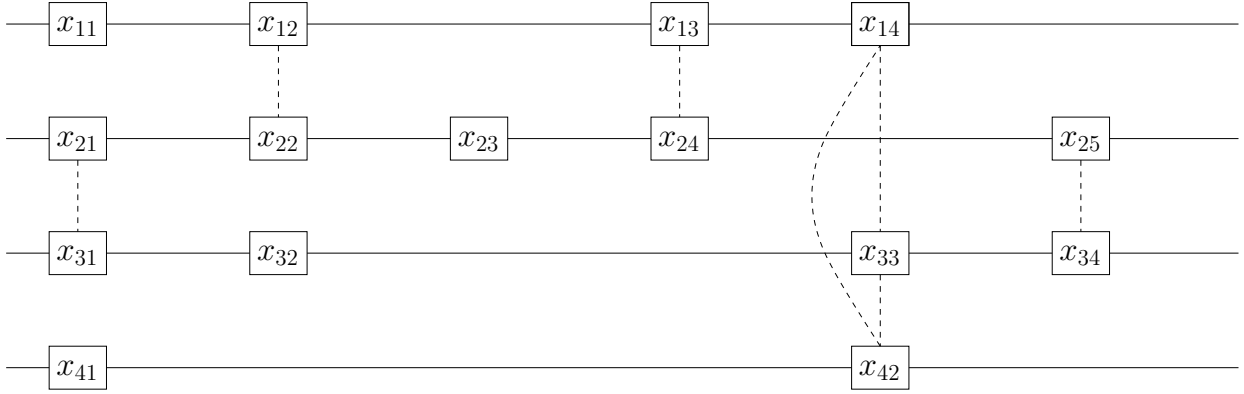}}
  \caption{Reconstruction of $R$  from $\tau(R)$ (cf. Example \ref{biject})}
  \label{figure5}
\end{center}
\end{figure}

\noindent {\bf Proof of Theorem \ref{homgamma1}:} Consider the cover
$\{\Gamma_{(F,\bj)}:(F,\bj) \in S_{\cf}\}$ of $\ehat$, and its
associated intersection poset $Q$ as above.  By Proposition
\ref{sgam1}(ii), if $\ohat \neq q \in Q$ then $U_q$ is a sphere pointed
at $\infty$. Furthermore, if $q<p \in Q$ then the injection
$U_p \rightarrow U_q$ is a pointed embedding of a sphere (or the point
$\infty$ if $p=\ohat$) into a higher dimensional sphere and is thus
homotopic to the constant map $U_p \rightarrow \infty \in
U_q$.
Therefore by the Wedge Lemma (Lemma 1.8 in \cite{ZZ93}) there is a
homotopy equivalence
\begin{equation}
\label{wedge}
\ehat \simeq \bigvee_{q \in Q} \Delta(Q_{<q})*U_q.
\end{equation}
We next determine the contribution of each $q \in Q$ to
(\ref{wedge}). If $q=\ohat$ then $\Delta(Q_{<q})*U_q$ is contractible
to the point $\infty$ and hence does not contribute to
(\ref{wedge}). Suppose $q < \ohat$ and let $U_q=\Gamma_R$ where
$R=\{(F_{\ell},\bj_{\ell})\}_{\ell=1}^r \in A_{\cf}$.  Combining
Proposition \ref{sgam1}(ii) and (\ref{iso1}) it follows that
\begin{equation}
\label{eq:q1}
\begin{split}
  \Delta(Q_{<q})*U_q  &\cong  \Delta(M(\cf)_{\prec F_1})* \cdots * \Delta(M(\cf)_{\prec F_r})*S^{r-2}*S^{N-\sum_{(F,\bj) \in R} (|F|-1)} \nonumber \\
  &\cong S^{N-\sum_{i=1}^r (|F_i|-2)-1} \ast
  \overset{r}{\underset{i=1}{\varhexstar}} \Delta(M(\cf)_{\prec F_i}).
\end{split}
\end{equation}
\noindent
Therefore, if $R \in A_{\cf}(\bbm)$ and $U_q=\Gamma_R$ then
(cf.~(\ref{d:bfk}))
\begin{equation}
\label{gammaz}
\Delta(Q_{<q})* U_q \cong S^{N-c_{\cf}(\bbm)}\ast
\underset{F \in \cf}{\varhexstar}  \Delta(M(\cf)_{\prec F})^{*\bbm(F)}.
\end{equation}
Theorem \ref{homgamma1} now follows from (\ref{wedge}), (\ref{eq:q1}) and
Proposition \ref{affm}.  {\enp}

\section{Applications}
\label{s:appl}

In this section we use Theorem \ref{poin1} to study several specific
Euclidean pattern spaces.

\subsection{The Homology of
  $\vec{P}(\Rea^n\setminus \Int^n)_{\mb{0}}^{\mb{k}+\mb{1}}$}

As noted earlier, $\Rea^n \setminus\Int^n=X_{\cf}$ where $\cf$
consists of the single set $[n]$.  Since $\Delta(M(\cf)_{\prec [n]})$
is the empty complex $\{\emptyset\}$ it follows that
$f_{\KK}(\Delta(M(\cf)_{\prec [n]}),t) =1$. If $\bbm([n])=m>0$ then
$\bfk(\bbm)=\prod_{i=1}^n \binom{k_i}{m}$ and
$c_{\cf}(\bbm)=m(n-2)+1$. Theorem \ref{poin1} implies that
$$f_{\KK}(\vec{P}(\Rea^n\setminus \Int^n)_{\mb{0}}^{\mb{k}+\mb{1}},t)=\sum_{m \geq 1}
\prod_{i=1}^n \binom{k_i}{m} t^{m(n-2)+1}.$$
Since
$\tilde{H}_*(\Delta(M(\cf)_{\prec
  [n]}))=\tilde{H}_{-1}(\{\emptyset\})=\Int$
is free, it follows that
$\tilde{H}_{\ell}(\vec{P}(\Rea^n\setminus\Int^n)_{\mb{0}}^{\mb{k}+\mb{1}})$
is free of rank $\prod_{i=1}^n \binom{k_i}{m}$ if $\ell=(n-2)m>0$, and
is zero otherwise.  This recovers the above mentioned Theorem \ref{rz}
of Raussen and Ziemia\'{n}ski \cite{RZ14}.

\subsection{Binary Path Spaces}
The \emph{binary path space} associated with an upward closed
$\cf \subset 2^{[n]}$ is $\vec{P}(X_{\cf})_{\bzero}^{\btwo}$ where
$\btwo=(2,\ldots,2)$. Note that $\vec{P}(X_{\cf})_{\bzero}^{\btwo}$ is
homotopy equivalent to the diagonal subspace arrangement
$$\Rea^n- \bigcup_{F=\{i_1,\ldots,i_{\ell}\} \in \cf} \{x=(x_1,\ldots,x_n) \in \Rea^n: x_{i_1}=\cdots=x_{i_{\ell}}\}.$$
The general formula (\ref{po2}) for the Poincar\'{e} series of
$\vec{P}(X_{\cf})_{\bzero}^{\bbk+\bone}$ simplifies in this case as
follows.  Let $\mb{k}=\bone$ and let $0 \neq \bbm \in \Nat^{\cf}$.
Then $b_{\cf,\bone}(\mb{m})=1$ if both $\mb{m}(F) \leq 1$ for all
$F \in \cf$, and $\{F: \mb{m}(F)=1\} \in M(\cf)$. Otherwise
$b_{\cf,\bone}(\mb{m})=0$.  Hence, by (\ref{po2})
\begin{equation}
\label{po3}
f_{\KK}\left(\vec{P}(X_{\cf})_{\bzero}^{\btwo},t\right)=\sum_{\cg \in M(\cf)}
t^{\sum_{F \in \cg}(|F|-2)+1} \prod_{F \in \cg}
f_{\KK}\left(\Delta(M(\cf)_{\prec F}),t^{-1}\right).
\end{equation}
Equation (\ref{po3}) can also be obtained from the general
Goresky-MacPherson formula for the homology of subspace arrangements
\cite{GM88}.

\subsection{The $(s,\bbk)$-Equal Path Space}
Let $1 \leq s \leq n$ and $\mb{k}=(k_1,\ldots,k_n) \in \Nat_+^n$.  The
\emph{$(s,\bbk)$-equal path space} is defined as
$\vec{P}(X_{\cf_{n,s}})_{\bzero}^{\bbk+\bone}$ where
$\cf_{n,s}=\{F \subset [n]: |F| \geq s\}$. This path space occurs when
every process $T_i$ calls upon a single resource $a$ \emph{of
  capacity} $s-1$ a number $k_i$ of times.

We use Formula (\ref{po2}) to obtain some information on the homology
of this space.  For $m\geq s$, let $\opi_{m,s}$ denote the poset of
nontrivial partitions of $[m]$ such that every non-singleton block has
cardinality at least $s$.  The homology of the order complex
$\Delta(\opi_{m,s})$ had been determined by Bj\"{o}rner and Welker
\cite{B-Welker95} and was further studied in \cite{B-Wachs96,PRW99}.
We will need the following result:
\begin{theorem}[Theorem 4.5 in \cite{B-Welker95}, Corollary 6.2 in \cite{B-Wachs96}]
  $\Delta(\opi_{m,s})$ has the homotopy type of a wedge of spheres.
  The $d$-th Betti number of $\Delta(\opi_{m,s})$ is nonzero iff
  $d=m-3-\ell(s-2)$ for some
  $1 \leq \ell \leq \lfloor\frac{n}{s}\rfloor$, and
\begin{equation}
\label{bequal}
\tilde{\beta}_{m-3-\ell(s-2)}(\Delta(\opi_{m,s}))=
\sum_{\substack{j_1+\cdots+j_{\ell} =m \\ j_i \geq s}} \binom{m-1}{j_1-1,j_2,\ldots,j_{\ell}}\prod_{i=0}^{\ell-1} \binom{j_i-1}{s-1}.
\end{equation}
\end{theorem}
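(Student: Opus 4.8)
This is the Bj\"{o}rner--Welker / Bj\"{o}rner--Wachs theorem, which I would reconstruct via nonpure shellability. Adjoin to $\opi_{m,s}$ a bottom element $\hat 0$ (the all--singletons partition) and a top element $\hat 1 = \{[m]\}$, obtaining a bounded poset $\hat\Pi_{m,s}$ whose proper part is exactly $\opi_{m,s}$, so that $\Delta(\opi_{m,s}) = \Delta((\hat 0, \hat 1))$. The plan has three steps: (1) produce a (nonpure) EL-labeling of $\hat\Pi_{m,s}$; (2) invoke the Bj\"{o}rner--Wachs theory of nonpure shellable posets to conclude that $\Delta(\opi_{m,s})$ is homotopy equivalent to a wedge of spheres, with one copy of $S^{t-2}$ for each \emph{falling} maximal chain $\hat 0 = x_0 \lessdot x_1 \lessdot \cdots \lessdot x_t = \hat 1$ (one whose sequence of edge labels is strictly decreasing); and (3) enumerate the falling chains and read off (\ref{bequal}).

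For step (1): a covering relation in $\hat\Pi_{m,s}$ is of exactly one of three types --- (a) $\hat 0 \lessdot y$ with $y$ having a single non--singleton block, which is then forced to have size $s$; (b) adjoining one singleton to an existing non--singleton block; (c) merging two non--singleton blocks. I would label each cover by a pair, ordered lexicographically, whose first coordinate records (roughly) the least element of the newly merged material and whose second coordinate breaks ties, arranged so that any cover acting on the block containing $1$ receives the smallest available label; this is essentially the labeling of Bj\"{o}rner--Wachs. The substantive point --- and the step I expect to be the main obstacle --- is verifying the EL-condition: that every closed interval $[u,v] \subseteq \hat\Pi_{m,s}$ has a unique maximal chain with weakly increasing labels, and that this chain is lexicographically smallest among all maximal chains of $[u,v]$. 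This is delicate because intervals of $\hat\Pi_{m,s}$ are not again of the form $\hat\Pi_{m',s}$ but products of such posets with Boolean factors, so the verification is an induction in which all three cover types must be handled simultaneously, and the tie-breaking in the label must be tuned precisely so that increasingness forces a unique greedy chain.

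For steps (2)--(3): granting the labeling, $\Delta(\opi_{m,s})$ is a wedge of spheres indexed by the falling maximal chains of $\hat\Pi_{m,s}$, a chain with $t$ edges contributing a copy of $S^{t-2}$. A falling chain uses some number $\ell$ of type-(a) steps, and these create $\ell$ pairwise disjoint $s$-subsets of $[m]$ which grow into the maximal blocks $B_1, \dots, B_\ell$, say with $|B_i| = j_i \ge s$ and $\sum_i j_i = m$. Since each type-(a) step raises the rank in the ambient partition lattice $\Pi_m$ by $s-1$ while costing only one edge of the chain, whereas type-(b) and type-(c) steps raise it by $1$, the chain has length $t = (m-1) - \ell(s-2)$, giving a sphere of dimension $t-2 = m-3-\ell(s-2)$; and $1 \le \ell \le \lfloor m/s \rfloor$, since the first edge out of $\hat 0$ is forced to be of type (a) while the $\ell$ disjoint $s$-sets force $\ell s \le m$. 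To count the falling chains with prescribed block sizes $(j_1, \dots, j_\ell)$, one checks that the falling condition pins down the order of all type-(b) and type-(c) steps and forces the block containing $1$ to be created first, so the only remaining freedom is (i) distributing the other $m-1$ elements of $[m]$ among the blocks, contributing the multinomial $\binom{m-1}{j_1-1, j_2, \dots, j_\ell}$, and (ii) choosing inside each $B_i$ which $s-1$ of its $j_i-1$ non--minimal elements form the initial $s$-set, contributing the factor $\binom{j_i-1}{s-1}$; summing over $(j_1, \dots, j_\ell)$ gives (\ref{bequal}). As an independent check, the same Betti numbers drop out of the Goresky--MacPherson formula applied to the complement of the real $s$-equal subspace arrangement in $\mathbb{R}^m$, whose intersection lattice is $\hat\Pi_{m,s}$, by induction on $m$ using the product structure of its lower intervals; the agreement of the two computations is precisely what is guaranteed by $\Delta(\opi_{m,s})$ being a wedge of spheres.
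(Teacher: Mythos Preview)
The paper does not prove this theorem. It is quoted verbatim as a result from the literature (Theorem~4.5 in \cite{B-Welker95} and Corollary~6.2 in \cite{B-Wachs96}) and then used as a black box in the computation of the Poincar\'{e} series of the $(s,\bbk)$-equal path space. So there is no ``paper's own proof'' to compare against.

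That said, your sketch is a faithful outline of the arguments in the cited sources: the nonpure EL-shellability route is exactly the Bj\"{o}rner--Wachs approach in \cite{B-Wachs96}, and the Goresky--MacPherson cross-check you mention at the end is the original Bj\"{o}rner--Welker argument in \cite{B-Welker95}. You correctly flag that the verification of the EL-condition on all intervals is the substantive step, and your dimension count $t = (m-1) - \ell(s-2)$ and the enumeration leading to the multinomial times $\prod \binom{j_i-1}{s-1}$ are right. One small slip: the upper bound on $\ell$ in the statement should read $\lfloor m/s \rfloor$, as you in fact write in your argument; the paper's displayed ``$\lfloor n/s \rfloor$'' is a typo carried over from the source.
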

\noindent
Note that $\Delta(M((\cf_{n,s})_{\prec F}))\cong \Delta(\opi_{|F|,s})$ for any
$F \in \cf_{n,s}$. Theorem \ref{poin1}(i) implies that
$H_*(\vec{P}(X_{\cf_{n,s}})_{\bzero}^{\bbk+\bone})$ is free.
Moreover, by Theorem \ref{poin1}(ii)
\begin{equation}
\begin{split}
\label{po4}
&f_{\KK}\left(\vec{P}(X_{\cf_{n,s}})_{\bzero}^{\bbk+\bone},t\right) \\
&=\sum_{0 \neq \bbm \in \Nat^{\cf_{n,s}}}
\bfk(\bbm)t^{c_{\cf_{n,s}}(\bbm)}
\prod_{F \in \cf_{n,s}} f_{\KK}\left(\Delta(M((\cf_{n,s})_{\prec F})),t^{-1}\right)^{\bbm(F)} \\
&= \sum_{0 \neq \bbm \in \Nat^{\cf_{n,s}}} \bfk(\bbm)t^{\sum_{F \in
    \cf_{n,s}}\bbm(F) (|F|-2)+1} \mu(\bbm,t)
\end{split}
\end{equation}
where
$$
\mu(\bbm,t)=\prod_{F \in \cf_{n,s}}
\left(\sum_{\ell=1}^{\lfloor\frac{|F|}{s}\rfloor}
  \tilde{\beta}_{|F|-3-\ell(s-2)}(\Delta(\opi_{|F|,s}))t^{-|F|+2+\ell(s-2)}
\right)^{\bbm(F)}.
$$
It follows that $t^{\alpha}$ appears in
$f_{\KK}\left(\vec{P}(X_{\cf_{n,s}})_{\bzero}^{\bbk+\bone},t\right)$
with nonzero coefficient only if $\alpha \equiv 1 (\bmod (s-2))$.
\begin{corollary}
\label{homnz}
$\tilde{H}_{\ell}(\vec{P}(X_{\cf_{n,s}})_{\bzero}^{\bbk+\bone};\Int)=0$
unless $\ell=m(s-2)$ for some $m>0$.
\end{corollary}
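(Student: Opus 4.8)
The plan is to read the statement off the Poincar\'{e}-series formula (\ref{po4}) together with the freeness of $H_*$ that has already been established, so that no new topological input is needed --- everything reduces to tracking exponents of $t$. Recall from (\ref{eq:Poiser}) that the coefficient of $t^{\ell+1}$ in $f_{\KK}(\vec P(X_{\cf_{n,s}})_{\bzero}^{\bbk+\bone},t)$ is $\dim_{\KK}\thh_{\ell}(\vec P(X_{\cf_{n,s}})_{\bzero}^{\bbk+\bone};\KK)$. Since Theorem \ref{poin1}(i), applied through $\Delta(M((\cf_{n,s})_{\prec F}))\cong\Delta(\opi_{|F|,s})$ and the Bj\"{o}rner--Welker wedge-of-spheres description, already gives that $H_*(\vec P(X_{\cf_{n,s}})_{\bzero}^{\bbk+\bone};\Int)$ is free, it suffices to prove the asserted vanishing over a single field $\KK$; the integral statement then follows since there is no torsion and the $\KK$-Betti numbers coincide with the integral ranks.

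The computational heart is the expansion of the right-hand side of (\ref{po4}) and the inspection of the exponents that occur. Fix $0\neq\bbm\in\Nat^{\cf_{n,s}}$ with $\bfk(\bbm)\neq 0$. Expanding $\mu(\bbm,t)$ amounts to choosing, for every $F$ with $\bbm(F)>0$ and every $j\in\{1,\dots,\bbm(F)\}$, an index $\ell_j^{(F)}\in\{1,\dots,\lfloor |F|/s\rfloor\}$ with $\tilde{\beta}_{|F|-3-\ell_j^{(F)}(s-2)}(\Delta(\opi_{|F|,s}))\neq 0$; such a choice contributes a monomial of exponent
\[
c_{\cf_{n,s}}(\bbm)+\sum_{F}\sum_{j=1}^{\bbm(F)}\bigl(-|F|+2+\ell_j^{(F)}(s-2)\bigr)=1+(s-2)\sum_{F}\sum_{j=1}^{\bbm(F)}\ell_j^{(F)},
\]
because the terms $\sum_F\bbm(F)(|F|-2)$ inside $c_{\cf_{n,s}}(\bbm)=\sum_F\bbm(F)(|F|-2)+1$ cancel the terms $\sum_F\bbm(F)(2-|F|)$ produced by $\mu(\bbm,t)$. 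As $\bbm\neq 0$ and every $\ell_j^{(F)}\geq 1$, the double sum $T:=\sum_F\sum_j\ell_j^{(F)}$ is a \emph{positive} integer, so every exponent $\alpha$ that occurs with nonzero coefficient in $f_{\KK}(\vec P(X_{\cf_{n,s}})_{\bzero}^{\bbk+\bone},t)$ has the form $\alpha=1+(s-2)T$ with $T\geq 1$. This recovers the congruence $\alpha\equiv 1\pmod{s-2}$ already noted and sharpens it by the bound $\alpha\geq s-1$.

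Finally I would translate back through the dictionary above: if $\thh_{\ell}(\vec P(X_{\cf_{n,s}})_{\bzero}^{\bbk+\bone};\KK)\neq 0$ then $t^{\ell+1}$ has nonzero coefficient, hence $\ell+1=1+(s-2)T$ for some integer $T\geq 1$, i.e.\ $\ell=m(s-2)$ with $m=T>0$; freeness then transports this to $\Int$-coefficients, proving the corollary. The only genuine content --- and essentially the only place to slip up --- is the exponent bookkeeping in the expansion of $\mu(\bbm,t)$: one must see the cancellation of the $|F|$-dependent contributions and observe that $T\geq 1$ (which uses nothing beyond $\bbm\neq 0$) is exactly what excludes $\ell=0$ and so justifies the ``$m>0$'' in the statement. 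Everything else is immediate from results proved earlier in the paper.
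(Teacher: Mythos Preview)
Your argument is correct and is exactly the route the paper takes: expand the Poincar\'{e}-series formula (\ref{po4}), observe that every surviving exponent has the form $1+(s-2)T$ with $T\geq 1$, and pass from $\KK$- to $\Int$-coefficients via the freeness already established from Theorem~\ref{poin1}(i). Your write-up is in fact more careful than the paper's, which records only the congruence $\alpha\equiv 1\pmod{s-2}$ and leaves both the exponent cancellation and the ``$m>0$'' clause implicit.
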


\subsection{The Connectivity of Path Spaces}
\label{st:conn}

The following result determines the
homological connectivity of $\ovp(X_{\cf})_{\bzero}^{\bbk+\bone}$.
\begin{proposition}
\label{prop:homconn}
Let $s(\cf)=\min_{F \in \cf} |F|$. Then
$$\min\{i: \thh_i(\ovp(X_{\cf})_{\bzero}^{\bbk+\bone};\Int) \neq 0\}=s(\cf)-2.$$
\end{proposition}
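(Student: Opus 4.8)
The plan is to translate the statement into a question about the \emph{top} nonvanishing reduced homology of $\ehat$ and to read the answer off the wedge decomposition of Theorem \ref{homgamma1}. By Proposition \ref{prop:homeq} and Alexander duality in $\widehat{\intd}_{\bbk}\cong S^N$ (exactly as in the proof of Theorem \ref{poin1}(i)), $\thh_\ell(\vec{P}(X_{\cf})_{\bzero}^{\bbk+\bone};\Int)\cong\thh_{N-\ell-1}(\ehat;\Int)$, so the asserted value $s(\cf)-2$ for the homological connectivity of the path space is equivalent to $\max\{j:\thh_j(\ehat;\Int)\neq 0\}=N-s(\cf)+1$. Now Theorem \ref{homgamma1} exhibits $\ehat$ as a wedge, with multiplicities $\bfk(\bbm)$, of the spaces $W_{\bbm}:=S^{N-c_{\cf}(\bbm)}\ast\underset{F\in\cf}{\varhexstar}\Delta(M(\cf)_{\prec F})^{*\bbm(F)}$ (and only finitely many $\bbm$ have $\bfk(\bbm)\neq 0$, since $\bfk(\bbm)\neq 0$ forces $\sum_{F\ni i}\bbm(F)\le k_i$). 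Because reduced homology of a wedge is the direct sum of the reduced homologies, it suffices to prove (a) $\dim W_{\bbm}\le N-s(\cf)+1$ for every $0\neq\bbm\in\Nat^{\cf}$, and (b) there is an $\bbm$ with $\bfk(\bbm)\neq 0$ and $\thh_{N-s(\cf)+1}(W_{\bbm};\Int)\neq 0$ -- no cancellation can occur in the wedge.

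The one non-routine ingredient is the dimension bound $\dim\Delta(M(\cf)_{\prec F})\le |F|-s(\cf)-1$ for every $F\in\cf$ (with the convention $\dim=-1$ for the void complex, which is $\Delta(M(\cf)_{\prec F})$ precisely when $F$ is minimal in $\cf$). I would prove it by equipping $M(\cf)_{\preceq F}$ with the weight $\phi(\cg)=\sum_{G\in\cg}(|G|-1)$ and checking two things. First, $\cg\prec\cg'$ forces $\phi(\cg)<\phi(\cg')$: sending each block of $\cg$ to the block of $\cg'$ containing it, the blocks of $\cg$ inside a fixed block $G'$ of $\cg'$ are pairwise disjoint subsets of $G'$, so their total contribution to $\phi(\cg)$ is at most $|G'|-1$ (using $|G'|\ge 2$), with equality only when $G'$ is itself a block of $\cg$; summing over $G'$ and accounting for blocks of $\cg'$ containing no block of $\cg$ gives $\phi(\cg)\le\phi(\cg')$, with equality iff $\cg=\cg'$. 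Second, on $M(\cf)_{\preceq F}$ the invariant $\phi$ takes values in $\{s(\cf)-1,\dots,|F|-1\}$ (every block has size $\ge s(\cf)$) and attains $|F|-1$ only at the top element $\{F\}$. Hence a chain in $M(\cf)_{\prec F}$, which omits $\{F\}$, has strictly increasing $\phi$-values inside $\{s(\cf)-1,\dots,|F|-2\}$ and so at most $|F|-s(\cf)$ elements, giving $\dim\Delta(M(\cf)_{\prec F})\le|F|-s(\cf)-1$.

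Granting the lemma, (a) is a dimension count: a $k$-fold join has dimension the sum of the dimensions plus $k-1$, and $W_{\bbm}$ has $1+\sum_{F}\bbm(F)$ join factors, so with $c_{\cf}(\bbm)=\sum_F\bbm(F)(|F|-2)+1$,
\[
\dim W_{\bbm}\le\bigl(N-c_{\cf}(\bbm)\bigr)+\sum_{F\in\cf}\bbm(F)\bigl(|F|-s(\cf)-1\bigr)+\sum_{F\in\cf}\bbm(F)
=N-1+\bigl(2-s(\cf)\bigr)\sum_{F\in\cf}\bbm(F),
\]
which is $\le N-s(\cf)+1$ since $s(\cf)\ge 2$ and $\sum_F\bbm(F)\ge 1$; thus $\thh_i(\ehat;\Int)=0$ for $i>N-s(\cf)+1$, and hence $\thh_\ell(\vec{P}(X_{\cf})_{\bzero}^{\bbk+\bone};\Int)=0$ for $\ell<s(\cf)-2$. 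For (b), pick $F_0\in\cf$ with $|F_0|=s(\cf)$; being minimal in the upward closed $\cf$, it satisfies $M(\cf)_{\preceq F_0}=\{\{F_0\}\}$, so $M(\cf)_{\prec F_0}=\emptyset$, and for $\bbm_0$ the function with $\bbm_0(F_0)=1$ and $\bbm_0(F)=0$ otherwise we get $W_{\bbm_0}=S^{N-c_{\cf}(\bbm_0)}\ast\emptyset=S^{N-s(\cf)+1}$ while $\bfk(\bbm_0)=\prod_{i\in F_0}k_i\ge 1$. Hence $\thh_{N-s(\cf)+1}(\ehat;\Int)$ contains a free summand of rank $\prod_{i\in F_0}k_i\neq 0$, and Alexander duality yields $\thh_{s(\cf)-2}(\vec{P}(X_{\cf})_{\bzero}^{\bbk+\bone};\Int)\neq 0$. (Alternatively one can run the same count on the Poincar\'e series of Theorem \ref{poin1}(ii), whose coefficients are all non-negative: the lowest exponent contributed by the $\bbm$-term is $\ge 1+(s(\cf)-2)\sum_F\bbm(F)$, and the $\bbm_0$-term contributes $t^{s(\cf)-1}$ with coefficient $\prod_{i\in F_0}k_i$.)

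The main obstacle is the dimension estimate $\dim\Delta(M(\cf)_{\prec F})\le|F|-s(\cf)-1$: everything else is bookkeeping, but there one must find a genuinely strictly monotone rank-type function on the poset of matchings, and the verification that $\phi$ increases strictly along $\prec$ is where the (small) real work lies.
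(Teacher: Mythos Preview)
Your proof is correct and follows essentially the same approach as the paper's: both directions go through Alexander duality and the wedge decomposition of Theorem~\ref{homgamma1}, both hinge on the bound $\dim\Delta(M(\cf)_{\prec F})\le |F|-s(\cf)-1$, and both witness nonvanishing via the indicator function $\bbm_0$ of a minimum-size $F_0\in\cf$. The paper simply asserts the dimension bound (``first note that\dots'') and runs the count indexed by $F_1,\dots,F_r$ rather than by $\bbm$, whereas you supply a clean proof of that bound via the strictly $\prec$-monotone weight $\phi(\cg)=\sum_{G\in\cg}(|G|-1)$; this is a genuine (if small) addition, not a different method.
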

\noindent {\bf Proof:} Choose an $F \in \cf$ such that
$|F|=s(\cf)$. Then $\Delta(M(\cf)_{\prec F})$ is the empty complex
$\{\emptyset\}$ and therefore
$f_{\KK}(\Delta(M(\cf)_{\prec F}),t^{-1})=1$. Letting $\mb{m}(F')=1$
if $F=F'$ and zero otherwise, it follows from Theorem \ref{poin1}(ii)
that $t^{c_{\cf}(\mb{m})}=t^{|F|-1}=t^{s(\cf)-1}$ appears in
$f_{\KK}(\ovp(X_{\cf})_{\bzero}^{\bbk+\bone},t)$ with a positive
coefficient, and therefore
$\thh_{s(\cf)-2}(\ovp(X_{\cf})_{\bzero}^{\bbk+\bone};\KK) \neq 0$.
\ \\ \\
For the other direction, first note that for any $F \in \cf$
$$\dim \Delta(M(\cf)_{\prec F}) \leq |F|-s(\cf)-1.$$ Therefore for any
$F_1,\ldots,F_r \in \cf$
\begin{equation*}
\begin{split}
  \dim \overset{r}{\underset{i=1}{\varhexstar}}  \Delta(M(\cf)_{\prec F_i}) &\leq \sum_{i=1}^r (|F_i|-s(\cf)-1)+r-1 \\
  &=\sum_{i=1}^r |F_i|- r s(\cf)-1  \\
  &<\sum_{i=1}^r (|F_i|-2)-s(\cf)+2.
\end{split}
\end{equation*}
Thus $$\thh^j(S^{N-\sum_{i=1}^r (|F_i|-2)-1}  \ast
\overset{r}{\underset{i=1}{\varhexstar}}  \Delta(M(\cf)_{\prec F_i})=0$$
for all
\begin{equation*}
\begin{split}
j &\geq (N-\sum_{i=1}^r (|F_i|-2)-1)+(\sum_{i=1}^r (|F_i|-2)-s(\cf)+2)+1 \\
&=N-s(\cf)+2.
\end{split}
\end{equation*}
As $\ehat$ is a wedge of spaces of the form
$$S^{N-\sum_{i=1}^r (|F_i|-2)-1}  \ast
\overset{r}{\underset{i=1}{\varhexstar}} \Delta(M(\cf)_{\prec F_i})$$
where $F_1,\ldots,F_r \in \cf$, it follows that $\thh^j(\ehat;\Int)=0$
for all $j \geq N-s(\cf)+2$.  Finally, Alexander duality
$\thh_i(\ovp(X_{\cf})_{\bzero}^{\bbk+\bone};\Int) \cong
\thh^{N-i-1}(\ehat)$
implies that $\thh_i(\ovp(X_{\cf})_{\bzero}^{\bbk+\bone};\Int)=0$ for
all $i \leq s(\cf)-3$.  {\enp}

\noindent In fact, we establish the following stronger result:

\begin{proposition}
  Let $\mb{p}$ denote any directed path in
  $\vec{P}(X_{\cf})_{\bzero}^{\bbk+\bone}$. Then
  $\pi_i(\vec{P}(X_{\cf})_{\bzero}^{\bbk+\bone};\mb{p})=0$ for all
  $i \leq s(\cf)-3$.
\end{proposition}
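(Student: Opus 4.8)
The plan is to upgrade the homological vanishing of Proposition~\ref{prop:homconn} to a statement about homotopy groups by exhibiting an explicit, sufficiently highly connected model for $\vec{P}(X_{\cf})_{\bzero}^{\bbk+\bone}$, or rather for its homotopy-equivalent surrogate $D_{\cf}$. Recall from Proposition~\ref{prop:homeq} that $\vec{P}(X_{\cf})_{\bzero}^{\bbk+\bone}\simeq D_{\cf}=\intd_{\bbk}-E_{\cf}$, and that $\intd_{\bbk}$ is an open $N$-cell. The first step is therefore to reduce the claim to showing that $D_{\cf}$ is $(s(\cf)-3)$-connected, i.e.\ that $\pi_i(D_{\cf})=0$ for $i\le s(\cf)-3$; since $D_{\cf}$ is an open subset of $\Rea^N$ it is an ANR and in particular path-connected pieces are well behaved, and being nonempty it is $(-1)$-connected automatically, so the content is only for $0\le i\le s(\cf)-3$ (nonempty when $s(\cf)\ge 3$).

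The main tool I would use is the interplay between connectivity and Alexander duality, combined with the fact (already exploited in the proof of Proposition~\ref{prop:homconn}) that $\ehat$ is homotopy equivalent to a \emph{wedge of spheres}, each of dimension at least $N-s(\cf)+2$. Concretely: each wedge summand in Theorem~\ref{homgamma1} has the form $S^{N-c_{\cf}(\bbm)}\ast\bigvarhexstar_{F\in\cf}\Delta(M(\cf)_{\prec F})^{*\bbm(F)}$, and the dimension count carried out in the proof of Proposition~\ref{prop:homconn} shows every such summand is a space of dimension $\le N$ that is $\big(N-s(\cf)+1\big)$-dimensional-or-less in the relevant part — more precisely each summand is a wedge of spheres of dimension $\ge N-s(\cf)+2$. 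Hence $\ehat$ is $(N-s(\cf)+1)$-connected. Now I invoke a homotopy-theoretic Alexander duality statement for complements in $S^N$: if a (nice, e.g.\ compact polyhedral or ANR) subspace $Z\subset S^N$ is $c$-connected, then its complement $S^N-Z$ is $(N-c-2)$-connected. Applying this with $Z=\ehat$, $c=N-s(\cf)+1$ gives that $\widehat{\intd}_{\bbk}-\ehat = \intd_{\bbk}-E_{\cf}=D_{\cf}$ is $\big(N-(N-s(\cf)+1)-2\big)=(s(\cf)-3)$-connected, which is exactly what we want.

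The main obstacle, and the step I would spend the most care on, is justifying the homotopical (as opposed to merely homological) Alexander duality in this setting. The purely homological statement follows from ordinary Alexander duality as already used in Section~\ref{s:wedge}, but upgrading ``$\thh_i(D_{\cf})=0$ for $i\le s(\cf)-3$'' to ``$\pi_i(D_{\cf})=0$ for $i\le s(\cf)-3$'' requires simple-connectivity of $D_{\cf}$ as a base case — once $D_{\cf}$ is shown to be simply connected (when $s(\cf)\ge 4$), the Hurewicz theorem bootstraps the homological vanishing into the homotopy vanishing. So the real work is: (a) check $D_{\cf}$ is connected, and (b) when $s(\cf)\ge 4$, check $\pi_1(D_{\cf})=0$. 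For (b) one can argue directly from the wedge-of-spheres description: $\ehat$ is $(N-3)$-connected when $s(\cf)\ge 4$, and a general-position / transversality argument in $S^N$ shows that a loop in $D_{\cf}=S^N-\ehat$ can be contracted because any disk it bounds in $S^N$ can be pushed off the high-codimension set $\ehat$ (the relevant ``subspace pieces'' $\Gamma_R$ have codimension $\sum_{(F,\bj)\in R}(|F|-1)\ge s(\cf)-1\ge 3$ in the ambient cell, after accounting for the join with the low-dimensional order complexes). With simple-connectivity in hand, Hurewicz plus Proposition~\ref{prop:homconn} finishes the argument; alternatively, one cites a relative-Hurewicz / Blakers–Massey style statement or a known homotopical form of Alexander duality for polyhedra (e.g.\ via the functional/duality properties of $S$-duality, or directly from the fact that a $c$-connected compact polyhedron in $S^N$ has $(N-c-2)$-connected complement) to obtain the whole range at once. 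I would present the simple-connectivity-plus-Hurewicz route as it keeps everything within elementary algebraic topology and meshes cleanly with the wedge decomposition already established.
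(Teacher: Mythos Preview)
Your primary route contains a genuine error. You claim that $\ehat$ is $(N-s(\cf)+1)$-connected, but the dimension estimate in the proof of Proposition~\ref{prop:homconn} goes the other way: each wedge summand has dimension \emph{at most} $N-s(\cf)+1$, not at least $N-s(\cf)+2$. Indeed, already in the simplest case $\cf=\{[n]\}$ one has $\ehat\simeq\bigvee_m S^{N-m(n-2)-1}$, a wedge of spheres whose top dimension is exactly $N-s(\cf)+1$; this is certainly not $(N-s(\cf)+1)$-connected. Moreover, the ``homotopical Alexander duality'' you invoke (``$Z\subset S^N$ is $c$-connected $\Rightarrow$ $S^N\setminus Z$ is $(N-c-2)$-connected'') is false: take an unknotted $S^1\subset S^3$, which is $0$-connected, yet $S^3\setminus S^1\simeq S^1$ is not simply connected. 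Spanier--Whitehead duality is a stable phenomenon and does not yield unstable connectivity statements of this shape.

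Your fallback route---establish $\pi_1(D_{\cf})=0$ by a transversality argument and then bootstrap via Hurewicz and Proposition~\ref{prop:homconn}---is correct, and the transversality step you sketch is exactly the argument the paper gives. But the paper observes that the same transversality argument works uniformly for every $i\le s(\cf)-3$, not just $i\le 1$: given $F:S^i\to D_{\cf}$, extend (after a smooth approximation) to $G:D^{i+1}\to\intd_{\bbk}$ by coning to a base point, then perturb $G$ to be transverse to each stratum $G_{(F,\bj)}$ of $E_{\cf}$. Since every stratum has codimension $|F|-1\ge s(\cf)-1>i+1$, transversality forces the perturbed disk to miss $E_{\cf}$ entirely, giving a null-homotopy inside $D_{\cf}$. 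This makes the detour through Hurewicz and the wedge decomposition unnecessary.
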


\begin{proof}
  According to Proposition \ref{prop:homeq}, we may replace
  $\vec{P}(X_{\cf})_{\bzero}^{\bbk+\bone}$ with the homotopy
  equivalent space $D_{\cf}\subset\intd_{\bbk}$. Proposition
  \ref{prop:homconn} tells us that $D_{\cf}$ is connected; hence we
  can choose any base point $\mb{p}\in D_{\cf}$ in the
  following. Connectedness can also be concluded from the subsequent
  argument in the case $i=0$.

  Let $F:S^i\to D_{\cf}$ denote any continuous map. Its image $F(S^i)$
  is compact and has thus positive distance from the compact set
  $\overline{E_{\mc{F}}}\subset\Delta_{\bbk}$. $F$ admits a smooth
  approximation $\tilde{F}:S^i\to\intd_{\bbk}$ homotopic to $F$ and so
  close to $F$ that the image of the homotopy does not intersect
  $E_{\mc{F}}$. Extend $\tilde{F}$ to a smooth map
  $G:D^{i+1}\to\intd_{\bbk}$ by defining $G(0)=\mb{p}$ and by convex
  combination with $\tilde{F}$ on the boundary $S^i$. The image
  $G(D^{i+1})$ may intersect $E_{\cf}$.

  By multiple application of the transversality theorem (see
  e.g.~\cite[Theorem III.2.1]{Hirsch:76},\cite[Ch.~I.2]{AGV:85}), one
  can find a smooth approximation $H$ to $G$ that is transversal to
  all strata in $E_{\cf}$. Moreover, since the compact sets
  $G(D^{i+1})$ and $\partial\Delta_{\mb{k}}$ have a positive distance,
  we may assume that $H(D^{i+1})$ is contained in $\intd_{\bbk}$, as
  well. Each of the subspaces $G_{F,,\mb{j}}$ in the definition of
  $E_{\cf}$ has codimension $|F|-1$ in $\mb{R}^N$, and intersections
  have higher codimensions. In particular, if $i+1<|F|-1$, then
  $H(D^{i+1})\cap G_{F,,\mb{j}}=\emptyset$ by transversality. If
  $i+1<s(\cf)-1$, then $H(D^{i+1})\cap E_{\cf}=\emptyset$ and $H$
  establishes that $\tilde{F}$ and hence $F$ are nul-homotopic in
  $D_{\cf}$.
\end{proof}

\section{Concluding Remarks}
\label{s:remarks}
We conclude with a few remarks about possible extensions of the
results of this paper that we hope to deal with in future work. One
obvious challenge concerns finding maps from spheres, and more
generally products of spheres, into path space such that the images of
the fundamental classes may serve as generators for homology in the
appropriate dimensions, aiming at a generalization of \cite[Corollary
3.10]{RZ14} in the paper of Raussen and Ziemia\'{n}ski. This is work
in progress.

On the other hand, the situation we analysed is perhaps characterized
by more regularity than what is needed for the method to work. The
paper of Raussen and Ziemia\'{n}ski \cite{RZ14} calculates the
homology of the path space $\vec{P}(X)_{\mb{0}}^{\mb{k}+\mb{1}}$ with
$X=\Rea^n\setminus Y$ with $Y$ a \emph{subset} of $\Int^n$. It seems
likely that it is possible to extend our results to the following more
general situation (with $\cf$ an upward closed hypergraph on $[n]$ as
previously):

For $F\in\cf$ and $\alpha: F\to\Int$ a function, let
$Y_{\alpha}:=\{ (x_1,\dots ,x_n)|\; x_i=\alpha (i), i\in F\}$. For any
non-empty subset $\beta (F)\subset \Int^F$ let
$Y_{\beta (F)}:=\bigcup_{\alpha\in\beta (F)}Y_{\alpha}$. In the
present paper, we only considered $\beta (F)=\Int^F$.

Now we assume that for every $F\in\cf$ such a subset $\beta (F)$ has
been chosen. Coherence suggests either to make a choice only for
minimal elements of the family or to ask that $\beta (F_2)$ consists
of \emph{all extensions} of functions in $\beta (F_1)$ to $F_2$ in
case $F_1\subset F_2$. The set to be excluded is then the union of
hyperplanes $Y=\bigcup_{F\in\cf}Y_{\beta (F)}$. It seems likely that
one can determine the homology of
$\vec{P}(X)_{\mb{0}}^{\mb{k}+\mb{1}}$ with $X=\Rea^n\setminus Y$, as
well.

It is less obvious how to analyse topological properties of path
spaces associated to general PV spaces (cf.~Section \ref{s:intro}) via
arrangements -- those would no longer be given by restrictions of
\emph{linear} subspaces. Instead, one has to remove \emph{thickened}
subspace arrangements within products of simplices leading to pattern
spaces that are more difficult to analyse. For such thickened
arrangements, our method -- that makes essential use of the Wedge
Lemma -- is in general no longer applicable.

Since Ziemia\'{n}ski has shown \cite{Ziemianski:15} that every finite
simplicial complex can arise as a connected component of the path
space for some PV-space, one cannot expect a simple algorithmic
determination of the homology of such a path space in general.


\begin{thebibliography}{99}

\bibitem{AGV:85} V.I. Arnold and S.M. Gusein-Zade and A.N. Varchenko,
  Singularities of Differentiable Maps, Volume 1, Basel:
  Birkh\"{a}user, 1985.

\bibitem{B-Wachs96} A. Bj\"{o}rner and M. L. Wachs, Shellable nonpure
  complexes and posets. I, {\it Trans. Amer. Math. Soc.} {\bf
    348}(1996), 1299--1327.

\bibitem{B-Welker95} A. Bj\"{o}rner and V. Welker, The homology of
  "$k$-equal" manifolds and related partition lattices, {\it
    Adv. Math.} {\bf 110}(1995), 277--313.


\bibitem{Dijkstra:68}
E.W. Dijkstra, Co-operating sequential processes, Programming Languages
  (F.~Genuys, ed.), Academic Press, New York, 1968, 43--110.

\bibitem{FGHMR:15} L. Fajstrup, \'{E}. Goubault, E. Haucourt,
  S. Mimram and M. Raussen, Directed Algebraic Topology and
    Concurrency, Springer-Verlag, Berlin, 2016.

\bibitem{FGR:06} L. Fajstrup, \'{E}. Goubault, and M. Raussen,
  Algebraic {T}opology and {C}oncurrency, Theor. Comput. Sci
  \textbf{357}(2006), 241--278, Revised version of Aalborg University
  preprint, 1999.

\bibitem{Glabbeek:06} R. van Glabbeek, On the
    {E}xpressiveness of {H}igher {D}imensional {A}utomata,
  Theor.~Comput.~Sci. \textbf{368}(2006), 168--194.

\bibitem{GM88} M. Goresky and R. MacPherson, Stratified Morse theory,
  Springer-Verlag, Berlin, 1988.


\bibitem{Hirsch:76} M. Hirsch, Differential Topology, Springer-Verlag,
New York, 1976.

\bibitem{PRW99} I. Peeva, V. Reiner and V. Welker, Cohomology of real
  diagonal subspace arrangements via resolutions, {\it Compositio
    Math.} {\bf 117}(1999), 99--115.

\bibitem{Pratt:90}
V. Pratt, Modelling concurrency with geometry, Proc. of the 18th ACM
  Symposium on Principles of Programming Languages. (1991), 311--322.


\bibitem{Raussen:10}
M. Raussen, Simplicial models for trace spaces, Algebr. Geom. Topol.
  \textbf{10}(2010), 1683--1714.

\bibitem{Raussen:12a} M. Raussen, Simplicial models for
    trace spaces {I}{I}: General higher-dimensional automata,
  Algebr. Geom. Topol. \textbf{12}(2012), 1745--1765.

\bibitem{RZ14} M. Raussen and K. Ziemia\'{n}ski, Homology of spaces of
  directed paths on Euclidean cubical complexes, {\it J. Homotopy
    Relat. Struct.} {\textbf 9}(2014), 67--84.


\bibitem{Stanley73} R. P. Stanley, Acyclic orientations of graphs,
  {\it Discrete Math.} {\bf 5}(1973), 171--178.

\bibitem{wachs07} M. L. Wachs, Poset topology: tools and applications,
  in {\it Geometric combinatorics}, 497--615, IAS/Park City
  Math. Ser., 13, Amer. Math. Soc., Providence, RI, 2007.

\bibitem{walker88} J.W. Walker, Canonical homeomorphisms of posets,
  {\it European J. Combin.} {\bf 9}(1988), 97--107.

\bibitem{ZZ93} G. Ziegler and R. \v{Z}ivaljevi\'{c}, Homotopy types of
  subspace arrangements via diagrams of spaces, {\it Math. Ann.} {\bf
    295}(1993), 527--548.

\bibitem{Ziemianski:15} K. Ziemia\'{n}ski, On execution spaces of
  PV-programs, {\it Theoret. Comput. Sci.}, {\bf 619}(2016) 87-–98.
\end{thebibliography}
\end{document}